\newtheorem{theorem}{Theorem}
\theoremstyle{plain}
\newtheorem{case}{Case}
\newtheorem{claim}[theorem]{Claim}
\newtheorem{definition}[theorem]{Definition}
\newtheorem{fact}[theorem]{Fact}
\newtheorem{lemma}[theorem]{Lemma}
\newtheorem{proposition}[theorem]{Proposition}
\numberwithin{equation}{section}
\numberwithin{theorem}{section}
\def\A{\mathcal{A}}
\def\C{\mathcal{C}}
\def\F{\mathcal{F}}
\def\G{\mathcal{G}}
\def\h{\mathcal{H}}
\def\K{\mathcal{K}}
\def\R{\mathcal{R}}
\def\cP{\mathcal{P}}
\def\T{\mathcal{T}}
\def\ss{\mathcal{S}}
\def\Q{\mathcal{Q}}
\def\Y{\mathcal{Y}}
\def \a{\alpha}
\def \e{\epsilon}
\def \d{\text{deg}}
\def \r{\gamma}
\def \Cr{\text{Cross}}
\def\eg{\emph{e.g.}}
\begin{document}
\title{Minimum vertex degree threshold for $\C_4^3$-tiling}
\thanks{The second author is partially supported by NSA grant H98230-12-1-0283 and NSF grant DMS-1400073.}
\author{Jie Han}
\address[Jie Han and Yi Zhao]
{Department of Mathematics and Statistics,\newline
\indent Georgia State University, Atlanta, GA 30303}
\email[Jie Han]{jhan22@gsu.edu}
\author{Yi Zhao}
\email[Yi Zhao]{\texttt{yzhao6@gsu.edu}}%
\date{\today}
\subjclass{Primary 05C70, 05C65} %
\keywords{graph packing, hypergraph, absorbing method, regularity lemma}%
\begin{abstract}
We prove that the vertex degree threshold for tiling $\C_4^3$ (the 3-uniform hypergraph with four vertices and two triples) in a 3-uniform hypergraph on $n\in 4\mathbb N$ vertices is $\binom{n-1}2 - \binom{\frac34 n}2+\frac38n+c$, where $c=1$ if $n\in 8\mathbb N$ and $c=-\frac12$ otherwise. This result is best possible, and is one of the first results on vertex degree conditions for hypergraph tiling.
\end{abstract}

\maketitle

\section{Introduction}
Given $k\ge 2$, a $k$-uniform hypergraph (in short, \emph{$k$-graph}) consists of a vertex set $V$ and an edge set $E\subseteq \binom{V}{k}$, where every edge is a $k$-element subset of $V$. Given a $k$-graph $\h$ with a set $S$ of $d$ vertices (where $1\le d\le k-1$) we define $\d_{\h}(S)$ to be the number of edges containing $S$ (the subscript $\h$ is omitted if it is clear from the context). The \emph{minimum $d$-degree} $\delta_d(\h)$ of $\h$ is the minimum of $\d_{\h}(S)$ over all $d$-vertex sets $S$ in $\h$.

Given a $k$-graph $\G$ of order $g$ and a $k$-graph $\h$ of order $n$, a \emph{$\G$-tiling} (or \emph{$\G$-packing}) of $\h$ is a subgraph of $\h$ that consists of vertex-disjoint copies of $\G$. When $g$ divides $n$, a \emph{perfect $\G$-tiling} (or a \emph{$\G$-factor}) of $\h$ is a $\G$-tiling of $\h$ consisting of $n/g$ copies of $\G$. Define $t_d(n, \G)$ to be the smallest integer $t$ such that every $k$-graph $\h$ of order $n\in g\mathbb{N}$ with $\delta_d(\h)\ge t$ contains a perfect $\G$-tiling.

As a natural extension of the matching problem, tiling has been an active area in the past two decades (see surveys \cite{KuOs-survey,RR}). Much work has been done on the problem for graphs ($k=2$), see \eg, \cite{HaSz, AY96, KSS-AY, KuOs09}. In particular, K\"uhn and Osthus \cite{KuOs09} determined $t_1(n, \G)$, for any graph $\G$, up to an additive constant. Tiling problems become much harder for hypergraphs. For example, despite much recent progress \cite{AFHRRS, CzKa, Khan2, Khan1, KOT, RRS09, TrZh13}, we still do not know the 1-degree threshold for a perfect matching in $k$-graphs for arbitrary $k$.

Other than the matching problem, only a few tiling thresholds are known.
Let $K_4^3$ be the complete 3-graph on four vertices, and let $K_4^3-e$ be the (unique) 3-graph on four vertices with three edges. Recently Lo and Markstr\"om \cite{LM1} proved that $t_2(n, K_4^3)=(1+o(1))3n/4$, and independently Keevash and Mycroft \cite{KM1} determined the exact value of $t_2(n, K_4^3)$  for sufficiently large $n$. In \cite{LM2}, Lo and Markstr\"om proved that $t_2(n, K_4^3 - e)=(1+o(1))n/2$. Let
$\C_4^3$ be the unique 3-graph on four vertices with two edges. This 3-graph was denoted by $K_4^3 - 2e$ in \cite{CDN}, and by $\Y$ in \cite{HZ1}. Here we follow the notation in \cite{KO} and view it as a \emph{cycle} on four vertices.  K\"uhn and Osthus \cite{KO} showed that $t_2(n, \C_4^3)=(1+o(1))n/4$, and Czygrinow, DeBiasio and Nagle \cite{CDN} recently determined $t_2(n, \C_4^3)$ exactly for large $n$. In this paper we determine $t_1(n, \C_4^3)$ for sufficiently large $n$. From now on, we simply write $\C_4^3$ as $\C$.

Previously we only knew $t_1(n, K_3^3)$ \cite{Khan1,KOT} and $t_1(n, K_4^4)$ \cite{Khan2} exactly, and $t_1(n, K_5^5)$ \cite{AFHRRS}, $t_1(n, K_3^3(m))$ and $t_1(n, K_4^4(m))$ \cite{LM1} asymptotically,
where $K_k^k$ denotes a single $k$-edge, and $K_k^k(m)$ denotes the complete $k$-partite $k$-graph with $m$ vertices in each part. So Theorem~\ref{thmmain} below is one of the first (exact) results on vertex degree conditions for hypergraph tiling.

\begin{theorem}[Main Result] \label{thmmain}
Suppose $\h$ is a 3-graph on $n$ vertices such that $n\in 4\mathbb N$ is sufficiently large and
\begin{equation}
\delta_1(\h)\ge \binom{n-1}2 - \binom{\frac34 n}2+\frac{3}8n+c(n), \label{eqdeg}
\end{equation}
where $c(n)=1$ if $n\in 8\mathbb N$ and $c(n)=-1/2$ otherwise. Then $\h$ contains a perfect $\C$-tiling.
\end{theorem}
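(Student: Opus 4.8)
\medskip
\noindent\emph{Strategy and the dichotomy.}
I would prove Theorem~\ref{thmmain} by the absorbing method, preceded by a split that isolates the hypergraphs resembling the construction which makes \eqref{eqdeg} tight. That construction is as follows: let $\h_0$ have vertex set $A\cup B$ with $|A|=n/4-1$, and edge set consisting of all triples meeting $A$ together with the edges of a maximum $\C$-free $3$-graph on $B$ (one in which every pair lies in at most one edge, so the link of each $b\in B$ inside $B$ is a near-perfect matching). Then any $4$ vertices of $B$ span at most one edge, so every copy of $\C$ in $\h_0$ meets $A$; since $|A|<n/4$, $\h_0$ has no $\C$-factor, while a short computation shows $\delta_1(\h_0)$ is exactly one less than the right-hand side of \eqref{eqdeg}, the term $\frac38n+c(n)-1$ being the largest minimum degree attainable inside $B$ — a quantity that depends on whether a Steiner-type system exists on $|B|=3n/4+1$ points, equivalently on $n\bmod 8$, which is the source of $c(n)$. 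Accordingly I would call $\h$ \emph{$\varepsilon$-extremal} if some $B\subseteq V(\h)$ with $|B|\ge(3/4-\varepsilon)n$ has $\h[B]$ containing at most $\varepsilon n^4$ copies of $\C$, and argue separately in the non-extremal and extremal cases.

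\medskip
\noindent\emph{The non-extremal case.}
First I would prove an absorbing lemma: using \eqref{eqdeg}, every $4$-set $S$ has $\Omega(n^{t})$ \emph{absorbers} — $t$-vertex sets $T$ (for a fixed $t\in 4\mathbb N$) with $\C$-factors in both $\h[T]$ and $\h[T\cup S]$ — whence a standard probabilistic selection yields one set $M$, $|M|\le\beta n$ and $4\mid|M|$, with $\h[M\cup W]$ having a $\C$-factor for every $W\subseteq V(\h)\setminus M$ satisfying $|W|\le\beta^2 n$ and $4\mid|W|$. The absorber count follows since the link graph of any vertex has at least $\delta_1(\h)$ edges and hence $\Omega(n^3)$ cherries (paths of length two), so every vertex lies in many ``flexible'' copies of $\C$; a pair of codegree $0$ whose links are disjoint is the only nuisance, and the same density estimate shows it cannot be a global obstruction. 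Then, after deleting $M$, I would apply the weak hypergraph regularity lemma, transfer \eqref{eqdeg} to the cluster hypergraph $\R$, use non-extremality to obtain an almost-perfect fractional $\C$-tiling of $\R$, and convert it through the regular triples into a $\C$-tiling of $\h[V(\h)\setminus M]$ covering all but $o(n)$ vertices; discarding a bounded number of copies makes the uncovered set $W$ admissible, and absorbing $W$ with $M$ completes the $\C$-factor.

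\medskip
\noindent\emph{The extremal case.}
Here regularity is too lossy, so with $\h$ $\varepsilon$-extremal, witness $B$, and $A:=V(\h)\setminus B$, I would argue directly. After reclassifying $O(\varepsilon n)$ vertices one may assume $\h[B]$ has very few copies of $\C$, every vertex of $A$ has an almost-complete link on $B$, and $|A|=n/4-j$. If $|A|\ge n/4$ one builds the $\C$-factor so that each copy uses at least one vertex of $A$; if $j\ge1$, a counting shows every vertex of $B$ has degree inside $B$ exceeding the matching bound $\lfloor(|B|-1)/2\rfloor$ by an amount growing with $j$, forcing at least $j$ vertex-disjoint copies of $\C$ inside $B$ — the case $j=1$ being exactly where the non-existence of the relevant near-design on $B$ (hence the value of $c(n)$ and the split $n\bmod 8$) is needed to force even a single such copy, robustly extending the computation that $\h_0$ falls one short of \eqref{eqdeg}. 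Placing these $O(j)$ copies inside $B$ first to absorb the deficit, I would then match the remaining vertices of $B$, three at a time, to distinct vertices of $A$ forming copies of $\C$, via a Hall/greedy argument on the near-complete links of $A$ into $B$; keeping the counts exact delivers the perfect $\C$-tiling.

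\medskip
\noindent\emph{The main obstacle.}
I expect the extremal case to be the crux. Because $\h_0$ certifies that \eqref{eqdeg} is sharp up to an additive $1$, there is no asymptotic slack: the near-extremal structure must be pinned down precisely, one must show the degree deficit of the ``$\C$-free on $B$'' configuration is always genuine, that any surplus edge can be organised into the missing copy or copies of $\C$ inside $B$, and that the near-designs on $B$ can be completed — all while tracking an answer correct to within a constant that honestly depends on $n\bmod 8$. The absorbing lemma and the fractional-tiling step are also substantial, but proceed along now-familiar lines.
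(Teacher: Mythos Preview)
Your overall architecture --- extremal/non-extremal split, absorbing lemma plus almost-cover in the non-extremal case, direct structural argument in the extremal case --- matches the paper's, and your sketch of the extremal case has broadly the right shape (the paper also builds small preliminary tilings to balance $|A|$ against $n/4$ before matching $A$ into $C$ three-at-a-time via a Hall-type argument, Lemma~\ref{lem3}). But you have located the difficulty in the wrong place: you call the extremal case the crux and treat the absorbing lemma as ``along now-familiar lines'', whereas in the paper the absorbing lemma is the technical heart.

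Your absorbing-lemma justification is where the real gap sits. Many cherries in a link graph show each vertex lies in many copies of $\C$, but an absorber $T$ for an arbitrary $4$-set $S$ must carry a $\C$-factor both with and without $S$, which in practice needs a reachability statement: for generic pairs $u,v$ there must be many $(4i-1)$-sets $W$ with $\C$-factors in both $\h[u\cup W]$ and $\h[v\cup W]$. You note that a pair with disjoint links is a ``nuisance'' and assert it ``cannot be a global obstruction'', but even at $\delta_1\approx(7/16)\binom n2$ two link graphs can be disjoint (since $7/16+7/16<1$), and turning ``not a global obstruction'' into ``every $4$-set has $\Omega(n^t)$ absorbers'' is precisely the missing argument. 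The paper's solution is to invoke the \emph{strong} hypergraph regularity lemma (Theorem~\ref{thm:reg}) and the extension lemma of \cite{CFKO} --- not for the almost-tiling step where you place regularity, but to prove that all but $O(\theta n^2)$ pairs are $(\beta_0,2)$-reachable (Claim~\ref{clm:bad}); the small set $V'$ of vertices with too many unreachable partners is then covered greedily before absorbing begins. Relatedly, your almost-tiling step (``weak regularity, then fractional $\C$-tiling of $\R$'') hides a full theorem: the paper imports its $\C$-tiling lemma from \cite{HZ1}, and showing that a non-extremal $3$-graph with $\delta_1\approx(7/16)\binom t2$ has an almost-perfect (fractional) $\C$-tiling is the content of that lemma, not a routine consequence of non-extremality.
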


Proposition~\ref{factcounter} below shows that Theorem~\ref{thmmain} is best possible. Theorem~\ref{thmmain} and Proposition~\ref{factcounter} together imply that $t_1(n, \C)=\binom{n-1}2 - \binom{\frac34 n}2+\frac{3}8n+c(n)$.

\begin{proposition}\label{factcounter}
For every $n\in 4\mathbb{N}$ there exists a 3-graph of order $n$ with minimum vertex degree $\binom{n-1}2 - \binom{\frac34 n}2+\frac38 n+c(n)-1$, which does not contain a perfect $\C$-tiling.
\end{proposition}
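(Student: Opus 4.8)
The plan is to prove this via an explicit \emph{space barrier}. Fix a set $A$ of $n/4-1$ vertices, let $B$ be the remaining $3n/4+1$ vertices, and let $\L$ be a \emph{linear} $3$-graph on $B$ — one in which any two edges meet in at most one vertex — whose minimum degree is as large as possible; choosing the right $\L$ is the one delicate point. Put $V=A\cup B$ and
\[
E(\h)=\Bigl\{e\in\binom{V}{3}:e\cap A\neq\emptyset\Bigr\}\cup E(\L),
\]
i.e.\ take all triples meeting $A$ together with the edges of $\L$. Since a copy of $\C$ is a pair of triples sharing exactly two vertices, linearity of $\L=\h[B]$ forces $\h[B]$ to contain no copy of $\C$, so every copy of $\C$ in $\h$ uses a vertex of $A$. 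A perfect $\C$-tiling would be $n/4$ pairwise disjoint copies, each using a distinct vertex of $A$; this is impossible because $|A|=n/4-1$.

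It remains to evaluate $\delta_1(\h)$. Each $v\in A$ lies in all $\binom{n-1}{2}$ triples through it, so $\d_\h(v)=\binom{n-1}{2}$, far above the target. For $v\in B$, the edges at $v$ are precisely the triples through $v$ that meet $A$, numbering $\binom{n-1}{2}-\binom{|B|-1}{2}=\binom{n-1}{2}-\binom{3n/4}{2}$, together with the $\d_\L(v)$ edges of $\L$ at $v$. Hence $\delta_1(\h)=\binom{n-1}{2}-\binom{3n/4}{2}+\delta_1(\L)$, and the task reduces to producing, for every $n\in 4\mathbb N$, a linear $3$-graph $\L$ on $3n/4+1$ vertices with $\delta_1(\L)=\tfrac38n+c(n)-1$.

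The value of $\delta_1(\L)$ is governed by the residue of $|B|=3n/4+1$ modulo $6$: as $3n/4$ is a multiple of $3$ that is even exactly when $n\in 8\mathbb N$, we get $|B|\equiv1\pmod 6$ if $n\in 8\mathbb N$ and $|B|\equiv 4\pmod 6$ otherwise. The edges at a vertex of a linear $3$-graph use disjoint pairs, so every degree is at most $\lfloor(|B|-1)/2\rfloor$. If $n\in 8\mathbb N$, a Steiner triple system on $B$ exists (since $|B|\equiv1\pmod 6$) and is linear with all degrees $(|B|-1)/2=\tfrac38n$, which equals $\tfrac38n+c(n)-1$ as $c(n)=1$. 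If $n\equiv 4\pmod 8$, write $n=8k+4$; then $|B|=6k+4$ and $\lfloor(|B|-1)/2\rfloor=3k+1$, but not every vertex of a linear $\L$ on $B$ can have this degree, for then $\sum_{v\in B}\d_\L(v)=(6k+4)(3k+1)$ would be divisible by $3$, contradicting $(6k+4)(3k+1)\equiv1\pmod 3$; hence $\delta_1(\L)\le 3k=\tfrac38n-\tfrac32$. This bound is attained by deleting three points from a Steiner triple system on $|B|+3$ points (which exists, since $|B|+3\equiv1\pmod 6$): each surviving vertex loses at most three of its $(|B|+2)/2=3k+3$ edges, leaving degree at least $3k$, so $\delta_1(\L)=3k=\tfrac38n-\tfrac32=\tfrac38n+c(n)-1$ as $c(n)=-\tfrac12$.

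The hard part is precisely this last case $n\equiv4\pmod8$ — establishing both the mod-$3$ obstruction that caps $\delta_1(\L)$ from above and a construction meeting it — whereas the space-barrier mechanism and the case $n\in8\mathbb N$ are routine; one should also check the optimisation over $|A|$ (a smaller $A$ enlarges $\binom{|B|-1}{2}$ faster than it can gain from $\delta_1(\L)$) to confirm $|A|=n/4-1$ is the right choice.
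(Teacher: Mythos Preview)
Your proof is correct and follows essentially the same construction as the paper: partition $V=A\cup B$ with $|A|=n/4-1$, take all triples meeting $A$, and put a Steiner-type linear $3$-graph on $B$, using an $S(2,3,3n/4+1)$ when $n\in 8\mathbb N$ and an $S(2,3,3n/4+4)$ with three points removed when $n\equiv 4\pmod 8$. The one minor difference is in the $n\equiv 4\pmod 8$ case: the paper deletes the three vertices of an \emph{edge} $abc$ of the larger Steiner system, which guarantees directly that every surviving vertex loses \emph{exactly} three incident triples (the unique edges through $\{v,a\}$, $\{v,b\}$, $\{v,c\}$ are pairwise distinct since $abc$ is the sole edge through each pair among $a,b,c$), whence $\delta_1(\L)=3k$ on the nose; you instead allow any three deleted points, observe only that each survivor loses at most three edges, and then recover exactness via your mod-$3$ obstruction $\delta_1(\L)\le 3k$. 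Both routes are valid; your extra divisibility argument is not needed for the proposition itself but is a nice explanation of why $c(n)=-\tfrac12$ in this case. The closing remark about optimising $|A|$ is likewise unnecessary for the statement as posed.
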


\begin{proof}
We give two constructions similar to those in \cite{CDN}. Let $V=A\dot\cup B$
\footnote{Throughout the paper, we write $A\dot\cup B$ for $A\cup B$ when sets $A$, $B$ are disjoint.}
with $|A| =\frac n4 - 1$ and $|B|= \frac {3n}4+1$.
A Steiner system $S(2,3,m)$ is a 3-graph $\ss$ on $n$ vertices such that every pair of vertices has degree one -- so $S(2,3,m)$ contains no copy of $\C$. It is well-known that an $S(2,3,m)$ exists if and only if $m\equiv 1, 3 \mod 6$.

Let $\h_0=(V, E_0)$ be the 3-graph on $n\in 8\mathbb N$ vertices as follows. Let $E_0$ be the set of all triples intersecting $A$ plus a Steiner system $S(2,3,\frac34n+1)$ in $B$. Since for the Steiner system $S(2,3,\frac34n+1)$, each vertex is in exactly $\frac34n/2=\frac 38n$ edges, we have $\delta_1(\h_0)= \binom{n-1}2 - \binom{\frac {3n}4}2+\frac38 n$. Furthermore, since $B$ contains no copy of $\C$, the size of the largest $\C$-tiling in $\h_0$ is $|A|=\frac n4-1$. So $\h_0$ does not contain a perfect $\C$-tiling.

On the other hand, let $\h_1=(V, E_1)$ be the 3-graph on $n\in 4\mathbb N\setminus 8\mathbb N$ vertices as follows. Let $\G$ be a Steiner system of order $\frac 34n+4$. This is possible since $\frac34n+4\equiv 1\mod 6$. Then pick an edge $abc$ in $\G$ and let $\G'$ be the induced subgraph of $\G$ on $V(\G)\setminus\{a,b,c\}$. Finally let $E_1$ be the set of all triples intersecting $A$ plus $\G'$ induced on $B$. Since $\G$ is a regular graph with vertex degree $\frac12(\frac34n+4-1)=\frac38n+\frac32$, we have that $\delta_1(\G')=\frac38n+\frac32-3=\frac38n-\frac32$. Thus, $\delta_1(\h_1)= \binom{n-1}2 - \binom{\frac {3n}4}2+\frac38 n -\frac32$. As in the previous case, $\h_1$ does not contain a perfect $\C$-tiling.
\end{proof}

As a typical approach of obtaining exact results, we distinguish the \emph{extremal} case from the \emph{nonextremal} case and solve them separately. Given a 3-graph $\h$ of order $n$, we say that $\h$ is $\C$-free if $\h$ contains no copy of $\C$. In this case, clearly, every pair of vertices has degree at most one. Every vertex has degree at most $\frac{n-1}2$ because its link graph\footnote{Given 3-graph $\h=(V, E)$ and $x\in V$, the link graph of $x$ has vertex set $V\setminus \{x\}$ and the edge set $\{e\setminus \{x\}:e\in E(\h), x\in e\}$.
} contains no vertex of degree two.

\begin{definition}
Given $\e>0$, a 3-graph $\h$ on $n$ vertices is called $\e$-extremal if there is a set $S\subseteq V(\h)$, such that $|S| \ge (1-\e)\frac{3n}4$ and $\h[S]$ is $\C$-free.
\end{definition}

\begin{theorem}[Extremal Case]\label {lemE}
There exists ${\e}>0$ such that for every 3-graph $\h$ on $n$ vertices, where $n\in 4\mathbb N$ is sufficiently large, if $\h$ is ${\e}$-extremal and satisfies \eqref{eqdeg}, then $\h$ contains a perfect $\C$-tiling.
\end{theorem}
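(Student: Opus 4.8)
\emph{Step 1 (a clean partition).} Fix $\e$ small. By $\e$-extremality, pick $S\subseteq V(\h)$ with $\h[S]$ $\C$-free and $|S|\ge(1-\e)\tfrac{3n}4$, and enlarge $S$ to a set $B$ that is maximal with the property that $\h[B]$ is $\C$-free; let $A=V(\h)\setminus B$. For $v\in B$ the link graph of $v$ inside $\h[B]$ contains no vertex of degree $2$ (such a vertex would give a copy of $\C$), hence is a matching, so $v$ lies in at most $\lfloor\tfrac{|B|-1}2\rfloor$ edges of $\h[B]$; a short calculation from \eqref{eqdeg} (using that $\h[B]$ is a partial Steiner system) then forces $|B|\le\tfrac{3n}4$, whence $\tfrac n4\le|A|\le\tfrac n4+\e\tfrac{3n}4$. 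From \eqref{eqdeg} we also get that each $v\in B$ lies in all but at most $\e_1 n^2$ triples meeting $A$ (with $\e_1\to 0$ as $\e\to0$); in particular all but $\e_1n^2$ pairs $\{x,y\}\subseteq B$ have at least two vertices $a\in A$ with $\{a,x,y\}\in E(\h)$. Finally, maximality of $B$ gives, for each $a\in A$, a triple $T\subseteq B$ with $\{a\}\cup T$ spanning a copy of $\C$; recalling that $\{a\}\cup\{x,y,z\}$ spans a copy of $\C$ exactly when at least two of $\{a,x,y\},\{a,x,z\},\{a,y,z\},\{x,y,z\}$ are edges, call such a $T$ \emph{$a$-good}.

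\emph{Step 2 (using up the surplus of $A$).} Set $d=|A|-\tfrac n4\le\e\tfrac{3n}4$. Greedily select $d$ pairwise-disjoint copies of $\C$, each consisting of two vertices $a,a'\in A$ and a pair $\{x,y\}\subseteq B$ with $\{a,x,y\},\{a',x,y\}\in E(\h)$ (so $\{a,a',x,y\}$ spans a copy of $\C$ with center pair $\{x,y\}$); this never stalls, since there are $\Omega(n^2)$ eligible pairs $\{x,y\}$ and $d=o(n)$, and we may steer the chosen pairs clear of a bounded reserved set to be fixed in Step~3. Delete the $4d$ vertices used. What remains are $A_1\subseteq A$ and $B_1\subseteq B$ with $|B_1|=3|A_1|$, with $\h[B_1]$ still $\C$-free, and with every $a\in A_1$ still having an $a$-good triple inside $B_1$. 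It now suffices to partition $B_1$ into $|A_1|$ triples and assign an $a$-good one to each $a\in A_1$: together with the $d$ copies above this gives a perfect $\C$-tiling.

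\emph{Step 3 (the assignment).} Call $a\in A_1$ \emph{rich} if it has at least $\e_2 n^2$ good triples in $B_1$, and \emph{scarce} otherwise. For rich vertices, a textbook greedy argument applies: process them one at a time, each time picking a fresh good triple (always possible, as a rich vertex has $\Omega(n^2)$ good triples while at most $O(n)$ vertices of $B_1$ are ever blocked), holding back an $o(n)$-sized reservoir inside $A_1$ and $B_1$ to repair the final steps. The scarce vertices are the crux. Here one shows, from the near-tight bound \eqref{eqdeg} — in particular from the additive constant $c(n)$ that makes Proposition~\ref{factcounter} sharp — that a scarce $a$ has link inside $B_1$ essentially equal to a matching $M_a$ covering almost all of $B_1$, so that its good triples are exactly the edges of $\h[B_1]$ through a pair of $M_a$. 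One then argues that the scarce vertices, together with their good triples, satisfy Hall's condition, and so one can reserve for them pairwise-disjoint good triples (and feed the corresponding vertices of $B_1$ into the reserved set of Step~2). Inserting these reservations, the greedy procedure for the rich vertices completes the assignment.

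\emph{Main obstacle.} The one genuinely hard step is the treatment of scarce vertices in Step~3: extracting their matching-like link structure from the near-extremal degree bound, and verifying Hall's condition for them so that their good triples can be chosen disjointly while leaving the remainder of $B_1$ coverable by the rich vertices. This is precisely where the exact value of the threshold, and not merely its asymptotics, is used: with \eqref{eqdeg} weakened by one, the relevant Hall violation — several scarce vertices forced to compete for too few good triples, exactly as in the construction of Proposition~\ref{factcounter} — can occur. Everything else (partition cleanup, removing the surplus, the greedy assignment) is routine extremal-case bookkeeping.
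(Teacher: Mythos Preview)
Your two-way split $V=A\dot\cup B$ (with $B$ a maximal $\C$-free set) differs from the paper's three-way split $V=A\dot\cup B\dot\cup C$, where $C$ is the maximal $\C$-free set and $A$ is carved out of $V\setminus C$ by the high-degree condition $\deg(x,C)\ge(1-\alpha)\binom{|C|}{2}$; the leftover set $B$ (the paper's, not yours) is then shown to be tiny. This is not just cosmetic: the paper's choice guarantees \emph{every} vertex of $A$ is rich, so the final tiling (Lemma~4.3) never faces scarce vertices at all. Your scheme instead pushes the scarce vertices into Step~3 and promises to dispatch them by Hall's condition. That promise is the gap.

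Concretely: you assert that the scarce vertices together with their good triples satisfy Hall's condition, but you give no argument, and the shape of the claim is wrong in two ways. First, you need vertex-disjoint good triples, not merely distinct ones, so a bipartite Hall statement is not enough; you are really asking for a partial $\C$-tiling that covers every scarce vertex with three vertices of $B_1$ each. Second, you claim this is where the exact threshold bites, but in fact the only place \eqref{eqdeg} is needed to the last constant is your own Step~1 computation forcing $|B|\le\tfrac{3n}4$ (this is the paper's Fact~4.1); everything afterward runs on asymptotic bounds. The paper's substitute for your Hall step is Claim~4.3, which bounds the size $q$ of a maximum $\C$-tiling on (scarce vertices)$\cup C$ from below via a double-count of $e(BCC)$ against an extremal $\C$-tiling of a specific type. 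That argument is short but genuine, and nothing in your sketch reproduces it.

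There is a second, smaller gap: ``greedy with an $o(n)$ reservoir'' does not by itself produce an exact partition of $B_1$ into good triples. The paper handles this (Lemma~4.3) by a real absorbing argument: first set aside a random family $\F'$ of good triples rich for every $x\in X$, then partition the rest of $Z$ along a Hamiltonian cycle of the dense ``good-pair'' graph $G$, and finish with a bipartite matching between $X$ and the resulting triples. Your reservoir sentence would need to be expanded into something of this kind.
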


\begin{theorem}[Nonextremal Case]\label {lemNE}
For any ${\e}>0$, there exists $\r>0$ such that the following holds. Let $\h$ be a 3-graph on $n$ vertices, where $n\in 4\mathbb N$ is sufficiently large. If $\h$ is not ${\e}$-extremal and satisfies $\delta_1(\h)\ge \left(\frac7{16} - \r \right)\binom n {2}$, then $\h$ contains a perfect $\C$-tiling.
\end{theorem}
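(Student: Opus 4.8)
The plan is to follow the now-standard absorbing method, which splits the problem into an absorbing step and an almost-perfect tiling step. First I would prove an \emph{absorbing lemma}: there exists a small vertex set $M \subseteq V(\h)$ with $|M| \in 4\mathbb N$ and $|M| \le \gamma n$ (say) such that $\h[M]$ has a perfect $\C$-tiling, and moreover for every set $W \subseteq V(\h) \setminus M$ with $|W| \le \gamma' n$ and $|W| \in 4\mathbb N$, the induced subgraph $\h[M \cup W]$ also has a perfect $\C$-tiling. To build $M$, the key is to show that every $4$-set $Q$ (or rather, every quadruple of ``types'') has many \emph{absorbers}: sets $A$ of bounded size such that both $\h[A]$ and $\h[A \cup Q]$ have perfect $\C$-tilings. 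Since $\delta_1(\h) \ge (7/16 - \gamma)\binom n2$ is well above the threshold for a single copy of $\C$ through any fixed pair, a short counting/supersaturation argument should give $\Omega(n^{|A|})$ absorbers for each $Q$; then one takes $M$ to be a random subset of the union of a near-optimal family of absorbers and applies a standard concentration/deletion argument (as in Rödl--Ruciński--Szemerédi type constructions) so that every $Q$ retains many absorbers in $M$.

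Next I would prove an \emph{almost-perfect tiling lemma}: every $3$-graph $\h'$ on $n'$ vertices with $\delta_1(\h') \ge (7/16 - \gamma)\binom{n'}2$ that is not $\e$-extremal contains a $\C$-tiling covering all but at most $\gamma' n'$ vertices. Applying this to $\h' = \h \setminus M$ (whose minimum degree drops by only $O(\gamma n^2)$ and which is still not $2\e$-extremal, say), we get a tiling of $\h \setminus M$ leaving an uncovered set $W$ of size at most $\gamma' n$ with $|W| \in 4\mathbb N$ (the divisibility holds because $4 \mid n$, $4 \mid |M|$, and the tiling covers a multiple of $4$ vertices); then absorbing $W$ into $M$ completes the perfect $\C$-tiling. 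The almost-perfect tiling lemma is itself most naturally proved via the weak hypergraph regularity lemma: apply regularity to $\h'$, pass to the cluster (reduced) hypergraph $\R$ which inherits a minimum-degree-type condition, and find an almost-perfect \emph{fractional} $\C$-tiling of $\R$; each dense regular triple of clusters contributes a near-perfect $\C$-tiling inside the corresponding vertex blow-up, and summing over $\R$ covers almost all vertices. The non-extremality assumption is what rules out the pathological configuration (a huge $\C$-free set) that would obstruct such a fractional tiling at the reduced-hypergraph level.

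I expect the main obstacle to be the almost-perfect tiling lemma, specifically showing that the reduced hypergraph $\R$ with its inherited minimum-degree condition admits an almost-perfect fractional $\C$-tiling \emph{unless} $\h$ is $\e$-extremal. This requires a careful analysis: one must show that if no such fractional tiling exists, then by an LP-duality / Farkas-type argument there is a small ``cover'' in $\R$, which when pulled back to $\h$ exhibits a large $\C$-free set (or something close enough to one to invoke $\e$-extremality after cleaning up lower-order terms). Getting the constant $7/16$ to be exactly the right threshold for the fractional problem on $\R$ — and matching it to the extremal construction in Proposition~\ref{factcounter} where $B$ has relative size $3/4$ (note $1 - (1/4)^2 \cdot \text{something}$ heuristics give a $\C$-free link of density roughly $7/16$ around the right regime) — is the delicate quantitative heart of the argument, and is presumably where most of the technical work in the paper will go. A secondary difficulty is ensuring all divisibility conditions ($|M|, |W| \in 4\mathbb N$) are maintained throughout, which is routine but must be tracked.
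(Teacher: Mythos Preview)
Your high-level decomposition into an absorbing lemma plus an almost-tiling lemma is exactly what the paper does (Lemmas~\ref{lemA} and~\ref{lemM}), and the way you combine them is correct. However, your assessment of where the difficulty lies is inverted, and there is a genuine gap in your plan for the absorbing step.

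You write that since $\delta_1(\h)\ge(7/16-\gamma)\binom n2$ is ``well above the threshold for a single copy of $\C$ through any fixed pair, a short counting/supersaturation argument should give $\Omega(n^{|A|})$ absorbers for each $Q$''. This is the step that fails. A minimum \emph{vertex}-degree condition gives you no control whatsoever over the degree of a fixed pair: there may be pairs $uv$ with $\deg(uv)=0$, so you cannot force a copy of $\C$ through a prescribed pair, and the naive absorber-counting used under codegree conditions (as in~\cite{CDN}) breaks down. The paper's actual solution is substantial: it applies the \emph{strong} hypergraph regularity lemma and the Extension Lemma of~\cite{CFKO} to show (Claim~\ref{clm:bad}) that all but $O(\theta n^2)$ pairs of vertices are $(\beta_0,2)$-reachable, and then builds absorbers from reachable $7$-sets. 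The threshold needed for this is only $\delta_1\ge(\tfrac14+\theta)\binom n2$, and the paper remarks that $\tfrac14$ is sharp here. So the absorbing lemma, far from being a short supersaturation argument, is the main new technical contribution of this part of the paper.

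Conversely, the almost-perfect tiling lemma --- which you correctly identify as the place where the constant $7/16$ and the non-extremality hypothesis interact --- is not proved in this paper at all: it is quoted from~\cite{HZ1} (where it was a key step towards loose Hamilton cycles). Two further remarks: the leftover in that lemma is a \emph{constant} $2^{19}/\gamma$, not $\gamma' n$; and your proposed route via weak regularity and fractional tilings in the reduced hypergraph is a reasonable strategy, but it is not what~\cite{HZ1} does, and getting the sharp $7/16$ that way would require its own analysis.
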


Theorem \ref{thmmain} follows Theorems \ref{lemE} and \ref{lemNE} immediately by choosing $\e$ from Theorem~\ref{lemE}. The proof of Theorem~\ref{lemE} is somewhat routine and will be presented in in Section 4.

The proof of Theorem \ref{lemNE}, as the one of \cite[Theorem 1.5]{CDN}, uses the \emph{absorbing method} initiated by R\"odl, Ruci\'nski and Szemer\'edi, \eg, \cite{RRS06, RRS08}. More precisely, we find the perfect $\C$-tiling by applying the Absorbing Lemma below and the $\C$-tiling Lemma \cite[Lemma 2.15]{HZ1} together.

\begin{lemma}[Absorbing Lemma]\label{lemA}
For any $0<\theta\le 10^{-4}$, there exist $\beta>0$ and integer $n_{\ref{lemA}}$ such that the following holds. Let $\h$ be a 3-graph of order $n\ge n_{\ref{lemA}}$ with $\delta_1(\h)\ge (\frac14+\theta)\binom n2$. Then there is a vertex set $W\in V(\h)$ with $|W|\in 4\mathbb N$ and $|W|\le 2049\theta n$ such that for any vertex subset $U$ with $U\cap W=\emptyset$, $|U|\in 4\mathbb N$ and $|U|\le \beta n$ both $\h[W]$ and $\h[W\cup U]$ contain $\C$-factors.
\end{lemma}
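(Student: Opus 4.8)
The plan is to use the absorbing method of \cite{RRS06,RRS08}: first establish that every $4$-set of vertices admits many bounded-size ``absorbing gadgets'', and then select, at random, a bounded family of pairwise disjoint gadgets whose union is the required set $W$.

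Fix a suitable constant $t\in 4\mathbb N$. Call a $t$-set $T\subseteq V(\h)$ an \emph{absorber} for a $4$-set $Q$ (with $T\cap Q=\emptyset$) if both $\h[T]$ and $\h[T\cup Q]$ contain $\C$-factors, and write $\A(Q)$ for the set of all absorbers for $Q$. The heart of the proof is the claim that there is $\eta=\eta(\theta)>0$ with $|\A(Q)|\ge \eta n^{t}$ for every $4$-set $Q$. The delicate point is that a pair of vertices of $Q=\{q_1,\dots,q_4\}$ may have codegree $0$ in $\h$, so no copy of $\C$ in a $\C$-factor of $\h[T\cup Q]$ can contain two vertices of $Q$; hence the gadget must place each $q_i$ into its own copy of $\C$ together with three fresh vertices of $T$, while the remaining vertices of $T$ must carry a $\C$-factor of $\h[T]$ on their own. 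Accordingly I would take $T$ to be a ``grid-like'' vertex set and fix a template specifying both the copies of $\C$ that cover $T$ and the copies — each consisting of some $q_i$ and a triple of vertices of $T$ — that, together with the former, cover $T\cup Q$, the template chosen so that the constraint imposed on each vertex of $T$ can be satisfied one vertex at a time with $\Omega(n)$ admissible choices at each step. To count gadgets I would repeatedly invoke $\delta_1(\h)\ge(\tfrac14+\theta)\binom n2$ in two forms: (i) every link graph $L_x$ has $\ge(\tfrac14+\theta)\binom n2$ edges, so by Cauchy--Schwarz there are $\Omega(n^3)$ ordered triples $(a,b,c)$ with $xab,xac\in E$, whence $\{x,a,b,c\}$ spans $\C$; and (ii) a double count gives $\Omega(n^2)$ pairs of codegree $\ge 2$. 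Multiplying the available choices along the template yields $|\A(Q)|\ge\eta n^{t}$.

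Next comes the standard random selection. Include each of the $\le\binom nt$ potential $t$-sets independently with probability $p$, chosen so that each $Q$ has expected $\Theta(\beta n)$ of its absorbers selected while the expected number of intersecting pairs of selected sets is $o(\beta n)$. A Chernoff bound, union-bounded over the $\le n^{4}$ choices of $Q$, shows that with positive probability the selected family $\F_0$ captures $\ge\tfrac12 p\eta n^{t}$ absorbers for every $4$-set and has $\le 2p\binom nt$ members with few intersecting pairs; deleting one set from each intersecting pair leaves a family $\F$ of pairwise disjoint absorbers that still captures at least $\beta n/4$ absorbers for every $4$-set $Q\subseteq V(\h)$. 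Put $W:=\bigcup_{T\in\F}T$; then $|W|=t|\F|\in 4\mathbb N$, and since $\beta$ may be taken as small as we please in terms of $t,\eta,\theta$, the choice of $p$ can be arranged so that $|W|\le 2049\theta n$.

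Finally, for the absorbing property, given $U$ with $U\cap W=\emptyset$, $|U|\in4\mathbb N$ and $|U|\le\beta n$, partition $U$ into $4$-sets $Q_1,\dots,Q_m$ with $m\le\beta n/4$ and greedily pick distinct $T_s\in\F\cap\A(Q_s)$: this is possible because each $Q_s$ has $\ge\beta n/4\ge m$ absorbers in $\F$ while at most $m-1$ are already used. Then a $\C$-factor of $\h[W\cup U]$ is obtained by combining $\C$-factors of $\h[T_s\cup Q_s]$ for $s\le m$ with $\C$-factors of $\h[T]$ for the unused $T\in\F$, and taking $U=\emptyset$ gives the $\C$-factor of $\h[W]$. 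I expect the abundance claim to be the main obstacle — specifically, engineering the gadget and its template so that the unavoidable ``double constraint'' on each grid vertex (it lies in one copy of $\C$ inside $\h[T]$ and in one inside $\h[T\cup Q]$) remains solvable with linearly many choices throughout; the probabilistic selection and the final bookkeeping are then routine.
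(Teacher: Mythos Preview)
Your overall architecture (build many bounded-size absorbers for each $4$-set, randomly sparsify, greedily absorb) matches the paper's, but the core step---the abundance claim $|\A(Q)|\ge\eta n^t$ for \emph{every} $4$-set $Q$---is where your proposal and the paper diverge, and where your sketch is not yet a proof.

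The paper does \emph{not} establish abundance for all $4$-sets by elementary counting. Instead it introduces a notion of $(\alpha,i)$-\emph{reachability} between pairs of vertices and proves (Claim~3.1) that all but $O(\theta n^2)$ pairs are $(\beta_0,2)$-reachable; this is done via the hypergraph Regularity Lemma together with an Extension Lemma for regular complexes, applied to a carefully chosen $5$-vertex $(3,3)$-complex. From reachability one gets $24$-set absorbers, but only for $4$-sets lying in $V\setminus V'$, where $V'$ is a set of at most $512\theta n$ ``bad'' vertices. The vertices of $V'$ are then covered separately by a greedy $\C$-tiling $\A$, and $W$ is the union of $V(\A)$ with the randomly selected family; this two-part construction is precisely what produces the constant $2049$. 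Your plan to arrange $|W|\le 2049\theta n$ purely by tuning $p$ overlooks that in the paper the $2049\theta n$ budget is almost entirely consumed by covering $V'$, not by the random family.

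The elementary tools you list---Cauchy--Schwarz on link graphs to count paths of length two, and double counting to find many high-codegree pairs---certainly give $\Omega(n^3)$ copies of $\C$ through any \emph{single} prescribed vertex, which the paper also uses. But the gadget you need imposes \emph{simultaneous} constraints: a vertex of $T$ must lie in one copy of $\C$ inside $\h[T]$ and in another inside $\h[T\cup Q]$, and chaining these constraints forces codegree-type conditions between freshly chosen vertices, over which $\delta_1$ alone gives no direct control. You do not specify the template, and the paper's remark that the two-cliques construction at $\delta_1\approx\tfrac14\binom n2$ admits $4$-sets with \emph{no} absorber whatsoever shows the claim is genuinely tight; the $\theta$ slack must be exploited in a structural way, which is exactly what the regularity route accomplishes (the reduced $3$-graph inherits minimum degree $>\tfrac14\binom t2$, forcing neighbourhoods of size $>t/2$ and hence common neighbours). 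Until you exhibit a concrete template and show each step has $\Omega(n)$ choices under $\delta_1\ge(\tfrac14+\theta)\binom n2$ alone, the abundance claim remains a gap.
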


\begin{lemma}[$\C$-tiling Lemma, \cite{HZ1}] \label{lemM}
For any $0<\r <1$, there exists an integer $n_{\ref{lemM}}$ such that the following holds. Suppose $\h$ is a 3-graph on $n>n_{\ref{lemM}}$ vertices with
\[
\delta_1(\h)\ge \left(\frac7{16} - \r \right)\binom n {2},
\]
then $\h$ contains a $\C$-tiling covering all but at most $2^{19}/\r$ vertices or $\h$ is $2^{11}\r$-extremal.
\end{lemma}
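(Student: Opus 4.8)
The plan is to prove Lemma~\ref{lemM} by the weak (3-uniform) regularity method: regularise $\h$, pass to the reduced 3-graph, establish there a dichotomy between a perfect fractional $\C$-tiling and the extremal structure, and then either deduce that $\h$ is $2^{11}\r$-extremal or convert the fractional tiling into an almost-perfect integer $\C$-tiling of $\h$ whose remainder is cleaned up to a constant. Fix auxiliary constants with $1/n\ll \e\ll d\ll \r$. Apply the weak regularity lemma to $\h$ to get a partition $V(\h)=V_0\dot\cup V_1\dot\cup\cdots\dot\cup V_t$ with $|V_0|\le \e n$, $|V_1|=\cdots=|V_t|=m$, and $t$ bounded in terms of $\e,d$, in which all but $\e\binom t3$ triples $\{i,j,k\}$ induce an $\e$-regular triple. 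Let $\R$ on $[t]$ have $ijk\in E(\R)$ iff $(V_i,V_j,V_k)$ is $\e$-regular with density at least $d$. Summing $\d_\h(v)$ over $v\in V_i$ and discarding the edges meeting $V_0$, the edges inside a single cluster, and the edges lying in irregular or low-density cluster triples — each contributing $o(m\binom n2)$ — gives $\delta_1(\R)\ge(\tfrac7{16}-\r_1)\binom t2$ with $\r_1\le 3\r$.

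The combinatorial heart is the following dichotomy for $\R$: a 3-graph on $t$ vertices with $\delta_1\ge(\tfrac7{16}-\r_1)\binom t2$ either has a perfect fractional $\C$-tiling (a non-negative weighting of its copies of $\C$ with every vertex receiving total weight exactly $1$), or has a set $S'$ with $|S'|\ge(1-O(\r))\tfrac{3t}4$ such that $\R[S']$ is $\C$-free. In the second case one upgrades this to a $\C$-free subgraph of $\h$: since $\R[S']$ is linear it has only $O(t^2)$ edges, so the clusters of $S'$ span only $O(t^2m^3+d n^3)=o(n^3)$ edges of $\h$; after deleting the $O(\r t)$ clusters that sit in a low-density cluster triple still containing a copy of $\C$ in $\h$, the remaining clusters expand to a set $S\subseteq V(\h)$ with $|S|\ge(1-2^{11}\r)\tfrac{3n}4$ and $\h[S]$ $\C$-free, i.e.\ $\h$ is $2^{11}\r$-extremal, and we are done.

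To prove the dichotomy I would take a maximum $\C$-matching $M$ in $\R$ and let $I$ be its uncovered set. Maximality forces $\R[I]$ to be $\C$-free, hence linear with every vertex of degree below $|I|/2$, so each $v\in I$ sends almost all of its $\ge(\tfrac7{16}-\r_1)\binom t2$ edges into $V(\R)\setminus I$. If $|I|$ is small, local surgery on $M$ together with the leftover reweights to a perfect fractional $\C$-tiling. If $|I|$ is large, one studies the augmenting moves that pull four vertices of $I$ into one copy of $M$, turning it into two copies of $\C$; the failure of every such move pins the link structure of the $I$-vertices, up to $O(\r t)$ errors, to that of the extremal configuration $\h_0$ — there is a set $A'$ with $|A'|\le(\tfrac14+O(\r))t$ whose removal kills every copy of $\C$ in $\R$ — and $S'=V(\R)\setminus A'$ works. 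I expect this dichotomy to be the main obstacle: the degree hypothesis alone does not exclude ``intermediate'' $\R$ whose minimum vertex cover lies strictly between $\tfrac14 t$ and $\tfrac13 t$, so one must track the actual matching number and link structure of $\R$ (exploiting that $\h$ is not extremal) rather than the bare edge count, and one must also take care that ``$\C$-free in $\R$'' genuinely transfers to $\h$.

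It remains to handle the non-extremal case. First dispose of $V_0$ by greedily embedding, for each $v\in V_0$, a copy of $\C$ through $v$ on three previously unused cluster vertices; this is possible by the degree condition and perturbs cluster sizes by only $o(m)$. Then run the standard fractional-relaxation argument: scale the perfect fractional $\C$-tiling of $\R$ so that each copy of $\C$ in $\R$ — four clusters $V_i,V_j,V_k,V_l$ with $ijk,ijl\in E(\R)$ — is assigned a prescribed proportion of the vertices, and greedily embed vertex-copies of $\C$ into each quadruple, each using one vertex from each of $V_i,V_j,V_k,V_l$ via a pair in $V_i\times V_j$ typical for both $(V_i,V_j,V_k)$ and $(V_i,V_j,V_l)$; this covers all but an $\e$-fraction of the vertices, leaving $O(\e n)$ uncovered. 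Finally, reduce $O(\e n)$ to $2^{19}/\r$: set aside a tiny reserve inside each regular quadruple before the greedy step, and afterwards repeatedly absorb the $O(\e m)$ uncovered vertices of each cluster by locally rebuilding a bounded number of already-placed copies of $\C$ against that reserve, leaving only $O(1)$ vertices per cluster; since $t$ depends only on $\r$, the final uncovered set has fewer than $2^{19}/\r$ vertices.
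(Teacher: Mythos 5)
The paper itself contains no proof of Lemma~\ref{lemM}: it is imported verbatim from \cite{HZ1} (Lemma~2.15 there), where it is proved by a direct analysis of a largest $\C$-tiling -- counting edges between the uncovered set and the tiles and showing that failure to augment forces an exactly $\C$-free set of size close to $\frac34 n$ -- with no regularity machinery; the constants $2^{19}/\r$ and $2^{11}\r$ come out of that counting. Your regularity route is therefore genuinely different, but as written it defers the entire difficulty and breaks at several specific points. First, the ``combinatorial heart'' -- that minimum vertex degree $(\frac7{16}-\r_1)\binom t2$ in the reduced $3$-graph forces a perfect fractional $\C$-tiling or a near-extremal structure -- is exactly the content of the lemma you are trying to prove; it is not a citable off-the-shelf result, and your sketch (maximum $\C$-matching plus ``local surgery'' and ``augmenting moves'') is where all the work of \cite{HZ1} lives. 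A smaller but real error in the same step: weak regularity only gives the degree bound for \emph{most} clusters (the irregular triples may concentrate at a single cluster), so ``$\delta_1(\R)\ge(\frac7{16}-\r_1)\binom t2$'' should be an almost-all-vertices statement, exactly as in Lemma~\ref{lem:deg}.

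Second, the transfer of extremality back to $\h$ does not work. The paper's notion of $\e$-extremal requires a set $S$ with $\h[S]$ \emph{exactly} $\C$-free (equivalently, every pair in $S$ has degree at most one), and this is what the application in Theorem~\ref{lemNE} uses; it is not a density condition. The union of clusters indexed by a $\C$-free set of $\R$ can contain many copies of $\C$ of $\h$ sitting inside low-density triples, irregular triples, or triples meeting a cluster twice -- a set of density $o(1)$ is in general very far from $\C$-free. Your proposed fix fails quantitatively and qualitatively: $O(t^2m^3+dn^3)$ is not $o(n^3)$ since $d$ is a fixed constant, and the clusters lying in a low-density triple that still contains a copy of $\C$ need not number $O(\r t)$ -- they can be all of them. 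Third, the cleanup to a constant leftover is not justified: weak regularity plus greedy embedding leaves $\Theta(\e n)+|V_0|$ vertices uncovered, and your ``reserve plus locally rebuilding already-placed copies'' has no mechanism behind it -- regularity gives no control over the $\e$-fraction of atypical vertices in each cluster, and there is no blow-up-type statement at weak regularity producing near-perfect tilings of a regular quadruple. Getting down to $2^{19}/\r$ uncovered vertices (or proving exact $\C$-freeness in the extremal branch) is precisely why \cite{HZ1} argues directly on a largest tiling in $\h$ rather than through a reduced graph.
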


\smallskip
We postpone the proof of Lemma \ref{lemA} to Section 3 and prove Theorem \ref{lemNE} now.

\begin{proof}[Proof of Theorem \ref{lemNE}]

Without loss of generality, assume $0< \e < 1$. Let $\r=2^{-13}{\e}$ and $\theta=10^{-4}\r$ (thus $\theta<10^{-4}$). We find $\beta$ by applying Lemma \ref{lemA}. Choose $n\in 4\mathbb N$ such that $n>\max\{n_{\ref{lemA}}, 2n_{\ref{lemM}}, 2^{18}/(\r\beta)\}$.
Let $\h=(V, E)$ be a 3-graph on $n$ vertices. Suppose that $\h$ is not ${\e}$-extremal and $\delta_1(\h)\ge \left(\frac7{16} - \r \right)\binom n {2}$. First we apply Lemma \ref{lemA} to $\h$ and find the absorbing set $W$ with $|W|\le 2049\theta n$. Let $\h'=\h[V\setminus W]$ and $n'=n-|W|$. Note that $2|W| < 10^4 \theta n = \r n$ and thus $n' > n - \r n/2 > n_{\ref{lemM}}$. Furthermore,
\[
\delta_1(\h')\ge \delta_1(\h) - |W|(n-1)\ge \left(\frac7{16} - 2\r \right)\binom {n} {2} \ge \left(\frac7{16} - 2\r \right)\binom {n'} {2}.
\]
Second we apply Lemma \ref{lemM} to $\h'$ with parameter $2\r$ in place of $\r$ and derive that either $\h'$ is $2^{12}\r$-extremal or $\h'$ contains a $\C$-tiling covering all but at most $2^{18}/\r$ vertices. In the former case, since
\[
(1-2^{12}\r)\frac{3n'}4 > (1-2^{12}\r)\frac{3}{4}\left(n- \frac{\r n}2\right)>(1-2^{13}\r)\frac{3n}4=(1-{\e})\frac{3n}4,
\]
$\h$ is ${\e}$-extremal, a contradiction. In the latter case, let $U$ be the set of uncovered vertices in $\h'$. Then we have $|U|\in 4\mathbb{N}$ and $|U|\le 2^{18}/\r\le \beta n$ by the choice of $n$. By Lemma \ref{lemA}, $\h[W\cup U]$ contains a perfect $\C$-tiling. Together with the $\C$-tiling provided by Lemma \ref{lemM}, this gives a perfect $\C$-tiling of $\h$.
\end{proof}

\medskip

The Absorbing Lemma and $\C$-tiling Lemma in \cite{CDN} are not very difficult to prove because of the co-degree condition. In contrast, our corresponding lemmas are harder. Luckily we already proved Lemma \ref{lemM} in \cite{HZ1} (as a key step for finding a loose Hamilton cycle in 3-graphs). In order to prove Lemma \ref{lemA}, we will use the Strong Regularity Lemma and an extension lemma from \cite{CFKO}, which is a corollary of the counting lemma.

The rest of the paper is organized as follows. We introduce the Regularity Lemma in Section 2, prove Lemma \ref{lemA} in Section 3, and finally prove Theorem \ref{lemE} in Section 4.

\section{Regularity Lemma for $3$-graphs}
\subsection{Regular complexes}
Before we can state the regularity lemma, we first define a complex. A hypergraph $\h$ consists of a vertex set $V(\h)$ and an edge set $E(\h)$, where every edge $e\in E(\h)$ is a non-empty subset of $V(\h)$. A hypergraph $\h$ is a \emph{complex} if whenever $e\in E(\h)$ and $e'$ is a non-empty subset of $e$ we have that $e'\in E(\h)$. All the complexes considered in this paper have the property that every vertex forms an edge.

For a positive integer $k$, a complex $\h$ is a \emph{$k$-complex} if every edge of $\h$ consists of at most $k$ vertices. The edges of size $i$ are called $i$-edges of $\h$. Given a $k$-complex $\h$, for each $i\in [k]$ we denote by $\h_i$ the underlying $i$-graph of $\h$: the vertices of $\h_i$ are those of $\h$ and the edges of $\h_i$ are the $i$-edges of $\h$.

Given $s\ge k$, a \emph{$(k,s)$-complex} $\h$ is an $s$-partite $k$-complex, by which we mean that the vertex set of $\h$ can be partitioned into sets $V_1, \dots, V_s$ such that every edge of $\h$ is \emph{crossing}, namely, meets each $V_i$ in at most one vertex.

Given $i\ge 2$, an $i$-partite $i$-graph $\h$ and an $i$-partite $(i-1)$-graph $\G$ on the same vertex set, we write $\K_i(\G)$ for the family of all crossing $i$-sets that form a copy of the complete $(i-1)$-graph $K_i^{(i-1)}$ in $\G$. We define the density of $\h$ with respect to $\G$ to be
\[
d(\h|\G):=\frac{|\K_i(\G)\cap E(\h)|}{|\K_i(\G)|} \quad \text{if} \quad |\K_i(\G)|>0,
\]
and $d(\h|\G)=0$ otherwise. More generally, if ${\bf Q}=(Q_1, \dots, Q_r)$ is a collection of $r$ subhypergraphs of $\G$, we define $\K_i({\bf Q}):=\bigcup_{j=1}^r \K_i(Q_j)$ and
\[
d(\h|{\bf Q}):=\frac{|\K_i({\bf Q})\cap E(\h)|}{|\K_i({\bf Q})|} \quad \text{if} \quad |\K_i({\bf Q})|>0,
\]
and $d(\h|{\bf Q})=0$ otherwise.

We say that $\h$ is \emph{$(d,\delta,r)$-regular with respect to $\G$} if every $r$-tuple ${\bf Q}$ with $|\K_i({\bf Q})|>\delta |\K_i(\G)|$ satisfies $|d(\h|{\bf Q})-d|\le \delta$. Instead of $(d, \delta, 1)$-regularity we simply refer to \emph{$(d, \delta)$-regularity}.

Given a $(3,3)$-complex $\h$, we say that \emph{$\h$ is $(d_3,d_2,\delta_3,\delta,r)$-regular} if the following conditions hold:
\begin{enumerate}
\item For every pair $K$ of vertex classes, $\h_2[K]$ is $(d_2,\delta)$-regular with respect to $\h_1[K]$ unless $e(\h_2[K])=0$, where $\h_i[K]$ is the restriction of $\h_i$ to the union of all vertex classes in $K$.

\item $\h_3$ is $(d_3, \delta_3, r)$-regular with respect to $\h_2$ unless $e(\h_3)=0$.
\end{enumerate}

\subsection{Statement of the Regularity Lemma}

In this section we state the version of the regularity lemma due to R\"odl and Schacht \cite{RS} for 3-graphs, which is almost the same as the one given by Frankl and R\"odl \cite{FR}. We need more notation. Suppose that $V$ is a finite set of vertices and $\mathcal P^{(1)}$ is a partition of $V$ into sets $V_1,\dots, V_{t}$, which will be called \emph{clusters}. Given any $j\in [3]$, we denote by $\Cr_j=\Cr_j(\cP^{(1)})$ the set of all crossing $j$-subsets of $V$. For every set $A\subseteq [t]$ we write $\Cr_A$ for all the crossing subsets of $V$ that meet $V_i$ precisely when $i\in A$. Let $\cP_A$ be a partition of $\Cr_A$. We refer to the partition classes of $\cP_A$ as \emph{cells}. Let $\cP^{(2)}$ be the union of all $\cP_A$ with $|A|=2$ (so $\cP^{(2)}$ is a partition of $\Cr_2$). We call $\cP=\{\cP^{(1)}, \cP^{(2)}\}$ \emph{a family of partitions} on $V$.

Given $\cP=\{\cP^{(1)}, \cP^{(2)}\}$ and $K=v_i v_j v_k$ with $v_i\in V_i$, $v_j\in V_j$ and $v_k \in V_k$,  the \emph{polyad} $P(K)$ is the 3-partite 2-graph on $V_i\cup V_j\cup V_k$ with edge set $C(v_i v_j)\cup C(v_i v_k)\cup C(v_j v_k)$, where \eg, $C(v_i v_j)$ is the cell in $\cP_{i,j}$ that contains $v_i v_j$. We say that $P(K)$ is \emph{$(d_2,\delta)$-regular} if all $C(v_i v_j),C(v_i v_k), C(v_j v_k)$ are $(d_2,\delta)$-regular with respect to their underlying sets. We let $\hat {\cP}^{(2)}$ be the family of all $P(K)$ for $K\in \Cr_3$.

Now we are ready to state the regularity lemma for 3-graphs.

\begin{theorem}[R\"odl and Schacht \cite{RS}, Theorem 17]\label{thm:reg}
For all $\delta_3>0, t_0\in \mathbb N$ and all functions $r:\mathbb N \rightarrow \mathbb N$ and $\delta: \mathbb N\rightarrow (0,1]$, there are $d_2>0$ such that $1/d_2\in \mathbb N$ and integers $T, n_0$ such that the following holds for all $n\ge n_0$ that are divisible by $T!$. Let $\h$ be a 3-graph of order $n$. Then there exists a family of partitions $\cP=\{\cP^{(1)}, \cP^{(2)}\}$ of the vertex set $V$ of $\h$ such that
\begin{enumerate}
\item $\cP^{(1)}=\{V_1,\dots, V_t\}$ is a partition of $V$ into $t$ clusters of equal size, where $t_0\le t\le T$,
\item $\cP^{(2)}$ is a partition of $\Cr_2$ into at most $T$ cells,
\item for every $K\in \Cr_3$, $P(K)$ is $(d_2, \delta(T))$-regular,
\item $\sum |\K_3({P})|\le \delta_3 |V|^3$, where the summation is over all ${P}\in \hat {\cP}^{(2)}$ such that $\h$ is not $(d, \delta_3,r(T))$-regular with respect to ${P}$ for any $d> 0$.
\end{enumerate}
\end{theorem}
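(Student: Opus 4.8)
The plan is to prove the lemma by the standard \emph{index-boosting} (energy-increment) strategy, adapted to the two-level hierarchy carried by a $3$-graph. First I would attach to each family of partitions $\cP=\{\cP^{(1)},\cP^{(2)}\}$ a scalar \emph{index} assembled from mean squares of densities: at the top level, $\sum_{P\in\hat{\cP}^{(2)}}\frac{|\K_3(P)|}{n^3}\,d(\h_3|P)^2$, and at the bottom level an analogous quantity measuring the mean square of the densities of the cells of $\cP^{(2)}$ relative to their underlying complete bipartite pairs; both quantities lie in a bounded interval, and refining either level of $\cP$ can only increase them (convexity). The heart of the matter is a \emph{defect Cauchy--Schwarz} estimate: if $\h_3$ fails to be $(d,\delta_3,r(T))$-regular for every $d$ with respect to a polyad $P$ of positive $\K_3$-weight, then some bounded refinement of the three cells composing $P$ produces sub-polyads on which the mean square of the $\h_3$-densities exceeds $d(\h_3|P)^2$ by at least an absolute constant times a fixed power of $\delta_3$; summing this over the offending polyads --- whose total $\K_3$-weight exceeds $\delta_3 n^3$ precisely when condition~(4) fails --- yields a refinement $\cP'$ of $\cP^{(2)}$ whose top-level index exceeds that of $\cP$ by at least some $c(\delta_3)>0$.

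The bottom level is controlled by a $2$-graph regularity lemma of Szemer\'edi type: within each pair of clusters one repeatedly refines the cells of $\cP^{(2)}$ until every cell is $(d_2,\delta(T))$-regular with respect to its underlying pair, each such refinement again boosting the bottom-level index by a positive amount. I would then interleave the two procedures --- a sweep of bottom-level re-regularization, then a sweep repairing top-level irregularity, repeated --- and monitor the \emph{sum} of the two indices as a single monovariant. Each sweep increases this sum by an amount depending only on $\delta_3$, the functions $\delta$ and $r$, and the current number of parts (never on $n$), while the sum stays bounded; hence the iteration terminates after a number of steps bounded by a function of $\delta_3$, $t_0$, $\delta$, $r$ alone, and this bound supplies $T$. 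With $T$ in hand, $\delta(T)$ is a fixed number, the $2$-graph regularity lemma applied with parameter $\delta(T)$ delivers $d_2$ (rounded so that $1/d_2\in\mathbb N$), and finally $n_0$ is chosen large. A last cosmetic step makes the clusters exactly equal: either equitability is maintained throughout by shunting the $O(1)$ leftover vertices aside at each stage, or $o(n)$ vertices are redistributed at the end and the resulting perturbation of densities is absorbed into $\delta_3$; the hypothesis $T!\mid n$ is exactly what permits an exact equipartition into $t\le T$ equal clusters.

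The main obstacle is the coupling between the two levels. Refining $\cP^{(2)}$ to repair top-level irregularity of $\h_3$ generically destroys the $(d_2,\delta(T))$-regularity of the polyads, forcing another bottom-level re-regularization, which refines $\cP^{(2)}$ further and may create fresh top-level irregularities --- a potentially nonterminating double loop. The resolution is to check that bottom-level re-regularization, being a refinement, never decreases the top-level index, so that the combined index is genuinely monotone and the double loop runs under one bounded monovariant; making this bookkeeping airtight, and --- relatedly --- arranging the order in which $T$, $\delta(T)$, $r(T)$ and $d_2$ are fixed so that no constant depends circularly on another, is where essentially all the work lies. The remaining ingredients --- the defect Cauchy--Schwarz inequality, the convexity of the index under refinement, and the equitability clean-up --- are routine.
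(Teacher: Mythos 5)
You should note first that the paper itself offers no proof of this statement: it is quoted verbatim from R\"odl and Schacht \cite{RS} (their Theorem 17) as a black box, so there is no in-paper argument to compare against. Your sketch is, in spirit, the classical Frankl--R\"odl-style double energy-increment proof \cite{FR}, whereas \cite{RS} actually deduce this statement from their regular approximation lemma; taking the older route is legitimate in principle, but as written your outline leaves the genuinely hard points unproved, and in one place the argument as stated would not work.

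Concretely: (a) your termination argument is insufficient. You say each sweep increases the combined index by an amount depending on ``the current number of parts''; since the index is bounded, an increment that degrades as the partition proliferates does not bound the number of sweeps. What the real proof needs is that each top-level repair boosts the index by at least a fixed positive function of $\delta_3$ \emph{alone} (this is what summing the defect Cauchy--Schwarz gain over the irregular polyads of total $\K_3$-weight at least $\delta_3 n^3$ provides), so that the number of top-level sweeps is bounded in terms of $\delta_3$ only; only then can the circular dependence of $\delta(T)$ and $r(T)$ on the terminal $T$ be closed by a recursion on the complexity after each sweep. You flag this circularity as ``where essentially all the work lies'' but do not resolve it. Relatedly, the irregularity witnesses here are $r(T)$-tuples $\mathbf{Q}$ of \emph{arbitrary} subgraphs of a polyad, not unions of existing cells, so incorporating them into a bounded refinement while retaining the mean-square boost is precisely the nontrivial step, not a routine convexity remark. (b) Conclusions (1)--(3) are stronger than what your construction yields: the theorem demands that \emph{every} polyad $P(K)$, $K\in \Cr_3$, be $(d_2,\delta(T))$-regular with one common density $d_2$ satisfying $1/d_2\in\mathbb N$. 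Iterated Szemer\'edi-type refinement inside each pair of clusters produces cells of assorted densities (and typically only ``most'' of them regular), and the 2-graph regularity lemma does not ``deliver $d_2$''; the common density must be manufactured by an additional slicing/equitization step (randomly splitting each pair into $1/d_2$ regular pieces of equal relative density), or by the regular-approximation machinery of \cite{RS}. Until these three items --- the $\delta_3$-only increment with $r$-tuple witnesses, the recursion fixing $T$, $\delta(T)$, $r(T)$, $d_2$ without circularity, and the equal-density/every-polyad upgrade --- are carried out, the proposal is a roadmap rather than a proof.
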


\subsection{The Reduced 3-graph and the Extension Lemma}

Given $t_0\in \mathbb N$ and $\delta_3 >0$, we choose functions $r:\mathbb N \rightarrow \mathbb N$ and $\delta: \mathbb N\rightarrow (0,1]$ such that the output of Theorem~\ref{thm:reg} satisfies the following hierarchy:
\begin{equation}
\label{eq:h1}
\frac{1}{n_0} \ll \left\{ \frac{1}{r}, \delta \right\} \ll \left\{ \delta_3, d_2, \frac{1}{T}\right\},
\end{equation}
where $r= r(T)$ and $\delta = \delta(T)$.
Let $\h$ be a 3-graph on $V$ of order $n\ge n_0$ such that $T!$ divides $n$. Suppose that $\cP=\{\cP^{(1)}, \cP^{(2)}\}$ satisfies Properties (1)--(4) given in Theorem~\ref{thm:reg}. For any $d>0$, the \emph{reduced 3-graph} $\R=\R(\h, \cP,d)$ is defined as the 3-graph whose vertices are clusters $V_1,\dots, V_{t}$ and three clusters $V_i, V_j, V_k$ form an edge of $\R$ if there is some polyad $P$ on $V_i\cup V_j\cup V_k$ such that $\h$ is $(d',\delta_3,r)$-regular with respect to ${P}$ for some $d'\ge d$.

\begin{fact}\label{fact:R}
Let $\R=\R(\h, \cP,d)$ be the reduced 3-graph defined above. If $V_i V_j V_k\in E(\R)$, then there exists a $(3,3)$-complex $\h^*$ on $V_i\cup V_j\cup V_k$ such that $\h^*_3$ is a subhypergraph of $\h$ and $\h^*$ is $(d', d_2, \delta_3, \delta, r)$-regular for some $d'\ge d$.
\end{fact}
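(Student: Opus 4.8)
The plan is to write $\h^*$ down explicitly, level by level, and then verify each clause of the definitions by unwinding notation; this is pure bookkeeping rather than a genuine difficulty. Since $V_iV_jV_k\in E(\R)$, by the definition of the reduced $3$-graph there is a polyad $P$ on $V_i\cup V_j\cup V_k$ and a real $d'\ge d$ such that $\h$ is $(d',\delta_3,r)$-regular with respect to $P$; I fix this $P$ and this $d'$. Now let $\h^*_1$ be the $1$-graph on $V_i\cup V_j\cup V_k$ in which every vertex forms an edge, let $\h^*_2:=P$ (the $3$-partite $2$-graph with edge set $C(v_iv_j)\cup C(v_iv_k)\cup C(v_jv_k)$), and let $\h^*_3:=E(\h)\cap\K_3(P)$ be the set of edges of $\h$ that span a triangle of $P$. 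By construction $\h^*_3$ is a subhypergraph of $\h$, and $\h^*$ is a $(3,3)$-complex: it is $3$-partite with classes $V_i,V_j,V_k$ and all its edges are crossing of size at most $3$; any $2$-subset of an edge of $\h^*_3$ is an edge of $P=\h^*_2$ by the definition of $\K_3(P)$; and any $1$-subset of an edge of $\h^*_2$ or $\h^*_3$ is a singleton, hence an edge of $\h^*_1$.

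It then remains to check the two regularity conditions in the definition of a $(d',d_2,\delta_3,\delta,r)$-regular $(3,3)$-complex. For condition~(1), fix a pair $K$ of the classes, say $K=\{V_i,V_j\}$; then $\h^*_2[K]$ is the cell $C(v_iv_j)$, which by Property~(3) of Theorem~\ref{thm:reg} is $(d_2,\delta)$-regular with respect to the complete bipartite graph between $V_i$ and $V_j$. Since that complete bipartite graph is precisely $\K_2(\h^*_1[K])$, this is exactly the assertion of condition~(1); and as $d_2>0$ the graph $\h^*_2[K]$ is nonempty, so the exceptional clause does not arise.

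For condition~(2) I would use the following observation: for any $r$-tuple ${\bf Q}=(Q_1,\dots,Q_r)$ of subhypergraphs of $P$ we have $\K_3({\bf Q})=\bigcup_{j=1}^r\K_3(Q_j)\subseteq\K_3(P)$, so $\K_3({\bf Q})\cap E(\h^*_3)=\K_3({\bf Q})\cap E(\h)$ and hence $d(\h^*_3|{\bf Q})=d(\h|{\bf Q})$. Thus $\h^*_3$ is $(d',\delta_3,r)$-regular with respect to $\h^*_2=P$ because $\h$ is; taking ${\bf Q}=(P,\dots,P)$ in this identity also shows $e(\h^*_3)>0$ (recall $|\K_3(P)|>0$ since $P$ has $(d_2,\delta)$-regular constituent cells), so again no exceptional clause is triggered, and this completes the verification. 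I do not anticipate any real obstacle here; the only steps worth isolating are the inclusion $\K_3({\bf Q})\subseteq\K_3(P)$, which is what transports the regularity of $\h$ with respect to $P$ down to $\h^*_3$ with respect to $\h^*_2$, and the bookkeeping needed to see that the ``$e(\cdot)=0$'' exceptions never fire.
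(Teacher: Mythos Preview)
Your proposal is correct and follows exactly the same construction as the paper: take the polyad $P$ witnessing $V_iV_jV_k\in E(\R)$, set $\h^*_2=P$ and $\h^*_3=E(\h)\cap\K_3(P)$, and invoke Property~(3) of Theorem~\ref{thm:reg}. The paper's proof is three sentences; you have simply written out the bookkeeping (the complex axioms, the density transfer $d(\h^*_3|{\bf Q})=d(\h|{\bf Q})$, and the non-emptiness checks) that the paper leaves implicit.
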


\begin{proof}
Since $V_i V_j V_k\in E(\R)$, there exists a polyad $P$ on $V_i\cup V_j\cup V_k$ such that $\h$ is $(d', \delta_3,r)$-regular with respect to ${P}$ for some $d'\ge d$. Let $\h^*_2=P$ and $\h^*_3=E(\h) \cap \K_3(P)$. By Theorem \ref{thm:reg}, $\h^*$ is a $(d', d_2, \delta_3, \delta, r)$-regular $(3,3)$-complex.
\end{proof}

The following lemma says that the reduced 3-graph almost inherits the minimum degree condition from $\h$. Its proof is almost identical to the one of \cite[Lemma 4.3]{KMO}, which gives the corresponding result on co-degree. We thus omit the proof.

\begin{lemma}\label{lem:deg}
In addition to \eqref{eq:h1}, suppose that
\[
\delta_3,{1}/{t_0} \ll d\ll \theta\ll \mu<1.
\]
Let $\h$ be a 3-graph of order $n\ge n_0$ such that $T!$ divides $n$ and $\delta_1(\h)\ge (\mu+\theta) \binom n2$. Then in the reduced 3-graph $\R=\R(\h, \cP,d)$, all but at most $\theta t$ vertices $v\in V(\R)$ satisfy $\deg_{\R}(v)\ge \mu \binom{t}2$.
\end{lemma}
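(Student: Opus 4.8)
The statement to prove is Lemma~\ref{lem:deg}, which transfers the minimum vertex degree condition from $\h$ to the reduced 3-graph $\R$.

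\smallskip

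The plan is to use a standard averaging/counting argument on the regular partition, mirroring the proof of \cite[Lemma 4.3]{KMO} but for vertex degree rather than co-degree. First I would fix a vertex class $V_i$ and estimate, for a typical vertex $v\in V_i$, how many edges of $\h$ through $v$ fail to be ``captured'' by regular polyads. The total number of triples $vxy$ with $v\in V_i$ is roughly $\binom{n}{2}$ per vertex; I want to show that for all but $\theta t$ clusters $V_i$, the cluster $V_i$ behaves well, and within such a cluster almost every vertex $v$ has $\deg_{\R}(v)\ge \mu\binom t2$.

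\smallskip

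The key steps, in order, are as follows. (1) Bound the number of triples $vxy$ that are not crossing (i.e.\ two of $v,x,y$ lie in the same cluster): since clusters have equal size $n/t$ and $t\ge t_0$ is large, this is at most $\frac{n}{t}\cdot n = n^2/t \le (\mu/100)\binom n2$, negligible after choosing $t_0$ large. (2) Bound the number of triples lying in polyads $P$ with respect to which $\h$ is \emph{not} $(d',\delta_3,r)$-regular for any $d'>0$: by Property~(4) of Theorem~\ref{thm:reg} this is at most $\delta_3 n^3$ in total, so by Markov's inequality all but at most $\sqrt{\delta_3}\, t$ clusters $V_i$ contain at most $\sqrt{\delta_3}\, n^2$ such triples through a typical vertex; since $\delta_3\ll d\ll\theta$, this contributes at most $(\mu/100)\binom n2$. (3) Bound the number of triples $vxy$ that lie in a regular polyad of density $d'<d$: there are at most $T$ cells and hence at most $T^3$ polyads, each contributing density at most $d$ relative to its $\K_3$, so the total is at most $d\,n^3 + $ (lower-order), and after dividing by $n$ and using $d\ll\theta$ this is again at most $(\mu/100)\binom n2$. (4) Every remaining triple $vxy$ through $v$ lies in a polyad on three distinct clusters with density at least $d$, i.e.\ it contributes to an edge $V_iV_jV_k$ of $\R$; each such edge of $\R$ accounts for at most $(n/t)^2$ triples through $v$. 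Combining, if $v$ is a good vertex in a good cluster then the number of edges of $\R$ at $v$ is at least $\frac{(\mu+\theta-\mu/20)\binom n2 - (\text{lower order})}{(n/t)^2} \ge \mu\binom t2$, using $\binom n2/(n/t)^2 \approx \binom t2$ and absorbing the error terms into the slack $\theta$. Finally, collecting the exceptional clusters from step (2) and the exceptional vertices within each good cluster (there are at most $\theta t / 2$ clusters each with at most $n/t$ bad vertices, say), one gets at most $\theta t$ bad vertices of $\R$ in total.

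\smallskip

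The main obstacle is the bookkeeping in step~(3)--(4): one must be careful that a triple $vxy$ is counted as ``bad'' only once, and that the conversion from a count of triples through $v$ to a degree in $\R$ does not lose more than a $(1+o(1))$ factor, because $|\K_3(P)|$ for a $(d_2,\delta)$-regular polyad $P$ on clusters of size $n/t$ is $(1\pm\delta)d_2^3(n/t)^3$ rather than exactly $(n/t)^3$ --- this is where the hierarchy $\delta\ll d_2$ from \eqref{eq:h1} and $d\ll\theta\ll\mu$ is used to swallow all multiplicative and additive slack into the gap $\theta\binom n2$. Since this is essentially verbatim the argument of \cite[Lemma 4.3]{KMO} with ``pair $\{u,w\}$'' replaced by ``vertex $v$'' and $\binom{n-2}{1}$ replaced by $\binom{n-1}{2}$, I would simply cite that proof and omit the details, exactly as the paper does.
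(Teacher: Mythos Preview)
Your proposal is correct and matches the paper's approach exactly: the paper omits the proof entirely, noting that it is almost identical to \cite[Lemma~4.3]{KMO} with vertex degree in place of co-degree, and you arrive at the same conclusion after sketching the standard averaging argument. One minor remark: in your sketch you occasionally speak of a ``typical vertex $v$ in a good cluster'' as though $\deg_{\R}$ depended on the choice of $v\in V_i$, but of course $\deg_{\R}(V_i)$ is a property of the cluster alone; the cleanest phrasing is to sum $\deg_{\h}(v)$ over $v\in V_i$, subtract the three types of bad triples, and divide by $(n/t)^3$ --- but this is cosmetic and does not affect the argument.
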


Suppose that $\h$ is a $(3,3)$-complex with vertex classes $V_1, V_2, V_3$, and $\G$ is a $(3,3)$-complex with vertex classes $X_1, X_2, X_3$. A subcomplex $\h'$ of $\h$ is called a \emph{partition-respecting copy of} $\G$ if $\h'$ is isomorphic to $\G$ and for each $i\in [3]$ the vertices corresponding to those in $X_i$ lie within $V_i$.
We write $|\G|_{\h}$ for the number of (labeled) partition-respecting copies of $\G$ in $\h$.

Roughly speaking, the Extension Lemma \cite[Lemma 5]{CFKO}  says that if $\G'$ is an induced subcomplex of $\G$, and $\h$ is suitably regular, then almost all copies of $\G'$ in $\h$ can be extended to a large number of copies of $\G$ in $\h$. Below we only state it for $(3,3)$-complexes.  
\begin{lemma}[Extension Lemma \cite{CFKO}]
\label{lem:ext}
Let $r,b,b',m_0$ be positive integers, where $b'<b$, and let $c, \theta, d_2, d_3, \delta, \delta_3$ be positive constants such that $1/d_2\in \mathbb N$ and
\[
1/m_0\ll \{1/r, \delta\} \ll c \ll \min\{\delta_3, d_2\}\le \delta_3 \ll \theta, d_3, 1/b.
\]
Then the following holds for all integers $m\ge m_0$. Suppose that $\G$ is a $(3,3)$-complex on $b$ vertices with vertex classes $X_1,X_2, X_3$ and let $\G'$ be an induced subcomplex of $\G$ on $b'$ vertices. Suppose also that $\h^*$ is a $(d_3,d_2, \delta_3, \delta, r)$-regular $(3, 3)$-complex with vertex classes $V_1, V_2, V_3$, all of size $m$ and $e(\h^*)>0$. Then all but at most $\theta |\G'|_{\h^*}$ labeled partition-respecting copies of $\G'$ in $\h^*$ are extendible to at least $c m^{b-b'}$ labeled partition-respecting copies of $\G$ in $\h^*$.
\end{lemma}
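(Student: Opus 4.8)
The plan is to deduce Lemma~\ref{lem:ext} from the \emph{Dense Counting Lemma} for $(3,3)$-complexes — the standard companion to Theorem~\ref{thm:reg} in the R\"odl--Schacht/Frankl--R\"odl framework, and the result from which \cite{CFKO} itself derives the Extension Lemma. In the notation of this section its content is: if $F$ is any fixed $(3,3)$-complex with vertex classes $Y_1,Y_2,Y_3$ and $\h^*$ is $(d_3,d_2,\delta_3,\delta,r)$-regular with classes $V_1,V_2,V_3$ of size $m$, then, writing $d(F):=d_2^{e_2(F)}\,d_3^{e_3(F)}$ with $e_i(F)$ the number of $i$-edges of $F$, one has $|F|_{\h^*}=(d(F)\pm \gamma)\,m^{|V(F)|}$, where $\gamma$ can be made arbitrarily small by taking $\delta_3$ small (and then $\delta,1/r$ much smaller, $m$ large), provided $\delta_3\ll 1/|V(F)|$. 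I would take this as a black box and use it only for three fixed complexes of size at most $2b$.

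The first step is to introduce the \emph{doubled complex} $D$: take two vertex-disjoint copies $\G^{(1)},\G^{(2)}$ of $\G$, identify them along a common copy of $\G'$, and retain the three vertex classes, so that the images of $X_i$ in both copies lie in the $i$-th class of $D$. Then $D$ is a $(3,\,2b-b')$-complex, and since $\G'$ is an induced subcomplex of $\G$ we have $d(D)=d(\G)^2/d(\G')$ (the $\G'$-edges being counted once). For a partition-respecting copy $K$ of $\G'$ in $\h^*$, let $f(K)$ be the number of partition-respecting copies of $\G$ in $\h^*$ restricting to $K$; then $\sum_K f(K)=|\G|_{\h^*}$, and, because a partition-respecting copy of $D$ is exactly an ordered pair of copies of $\G$ restricting to the same copy of $\G'$ and otherwise vertex-disjoint, $\sum_K f(K)^2=|D|_{\h^*}+O(m^{2b-b'-1})$, the error accounting for pairs of extensions that share a new vertex. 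Applying the Dense Counting Lemma to $\G'$, $\G$ and $D$ — legitimate since each has bounded size, they all live on the single regular polyad $\h^*_2$ with the same parameters, and $\delta_3,\delta\ll 1/b$ — gives $|\G'|_{\h^*}=(p'\pm\gamma)m^{b'}$, $|\G|_{\h^*}=(p\pm\gamma)m^{b}$ and $|D|_{\h^*}=(p^2/p'\pm\gamma)m^{2b-b'}$, where $p:=d(\G)$ and $p':=d(\G')$ satisfy $0<p\le p'\le 1$ and $p\ge p_0:=d_2^{\binom b2}d_3^{\binom b3}>0$.

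A second-moment estimate then finishes the proof. With $\bar f := |\G|_{\h^*}/|\G'|_{\h^*}$ we have $\sum_K (f(K)-\bar f)^2=\sum_K f(K)^2-|\G|_{\h^*}^2/|\G'|_{\h^*}$; the leading terms $p^2/p'$ cancel, leaving $\sum_K (f(K)-\bar f)^2\le C(p_0)\,\gamma\, m^{2b-b'}$ for a constant $C(p_0)$ depending only on $p_0$. On the other hand $\bar f\ge \tfrac12(p/p')m^{b-b'}\ge \tfrac12 p_0\, m^{b-b'}$, while the hierarchy forces $c\ll p_0$; hence any copy $K$ of $\G'$ with $f(K)<c\,m^{b-b'}$ satisfies $(f(K)-\bar f)^2\ge \tfrac1{16}p_0^2\, m^{2(b-b')}$ and so contributes that much to the sum. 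Therefore the number of such $K$ is at most $C'(p_0)\,\gamma\, m^{b'}$ for a constant $C'(p_0)$; since $\gamma$ can be made smaller than any prescribed function of $\theta,d_2,d_3,b$, this is at most $\theta\,|\G'|_{\h^*}$, which is the assertion of the lemma.

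Conceptually this is not deep once the Dense Counting Lemma is granted; the two points requiring care are, first, verifying that the Counting Lemma really does apply to the doubled complex $D$ (it is crossing, of bounded size, shares the single regular polyad of $\h^*$, and the extra vertices are absorbed by $\delta_3,\delta\ll 1/b$), and second, the quantitative bookkeeping — one must check that the Counting Lemma error $\gamma$ can be pushed below a fixed positive quantity depending only on $\theta,d_2,d_3,b$, which is exactly what the chain $1/r,\delta\ll c\ll\delta_3\ll\theta,d_3,1/b$ is set up to deliver. I expect that bookkeeping to be the only genuine obstacle. (One could instead argue by induction on $b-b'$, reducing to the single-new-vertex case and composing extensions; but the base case still forces one either to invoke the Counting Lemma or to run a direct ``slicing'' argument through the regular polyad, so some form of counting is unavoidable.)
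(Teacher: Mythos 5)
The paper does not actually prove Lemma~\ref{lem:ext}: it is imported verbatim from \cite{CFKO}, so there is no in-paper argument to compare yours with. Your route --- two-sided counting lemma applied to $\G'$, $\G$ and the complex $D$ obtained by gluing two copies of $\G$ along $\G'$, then a second-moment/Cauchy--Schwarz step --- is the standard derivation of extension lemmas from counting lemmas and is essentially how the source obtains it. The combinatorial skeleton is sound: $D$ is again a $3$-partite complex (a $(3,3)$-complex on $2b-b'$ vertices, not a ``$(3,2b-b')$-complex''), labeled partition-respecting copies of $D$ correspond exactly to pairs of extensions of a common copy of $\G'$ that are disjoint outside it, the pairs sharing a new vertex are indeed $O_b(m^{2b-b'-1})$, and $c\ll d_2^{\binom b2}d_3^{\binom b3}$ is legitimate because in the hierarchy $c$ is chosen after $d_2,\delta_3$ and (transitively) after $d_3,\theta,1/b$.

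The one genuine gap is in your quantitative claim about the counting error $\gamma$. The hierarchy never makes $\delta_3$ small relative to $d_2$ --- and it cannot, since in Theorem~\ref{thm:reg} the density $d_2$ is produced \emph{after} $\delta_3$ and may be far smaller than it; this is exactly why the lemma only assumes $c\ll\min\{\delta_3,d_2\}$. Your bookkeeping uses an absolute error $\gamma$ and constants $C(p_0),C'(p_0)$ with $p_0=d_2^{\binom b2}d_3^{\binom b3}$, so it requires $\gamma\lesssim\theta\cdot\mathrm{poly}(p_0)$, i.e.\ $\gamma$ below a large power of $d_2$; the assertion that $\gamma$ ``can be made smaller than any prescribed function of $\theta,d_2,d_3,b$'' via $\delta_3$ is therefore not available. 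The repair is standard: quote the counting lemma in multiplicative form, $|F|_{\h^*}=(1\pm\varepsilon)\,d_2^{e_2(F)}d_3^{e_3(F)}m^{|V(F)|}$, where $\varepsilon$ is controlled by $\delta_3$ relative to $d_3$ and $1/b$ only, while the second-level error enters as $\delta$ against powers of $d_2$, which is fine since $\delta\ll c\ll d_2$. Then in
\[
\sum_K\bigl(f(K)-\bar f\bigr)^2=\sum_K f(K)^2-\frac{|\G|_{\h^*}^2}{|\G'|_{\h^*}}
\]
the main terms $d(D)$ and $d(\G)^2/d(\G')$ cancel exactly, leaving a bound of order $\varepsilon\, d(D)m^{2b-b'}$, and since a bad copy contributes at least $\tfrac14\bigl(d(\G)/d(\G')\bigr)^2m^{2(b-b')}$ (using $c\le d(\G)/(4d(\G'))$), the number of bad copies is at most $O(\varepsilon)\,d(\G')\,m^{b'}\le\theta|\G'|_{\h^*}$ once $\varepsilon\le\theta/32$, with no negative powers of $d_2$ or $d_3$ appearing. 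With that adjustment your proof is complete; as written, it leans on a strengthening of the hierarchy ($\delta_3\ll d_2$) that neither the lemma nor the regularity framework provides.
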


\section{Proof of Lemma \ref{lemA}}

In this section we prove Lemma \ref{lemA} by using the lemmas introduced in Section~2. We remark that the constant $\frac14$ in Lemma \ref{lemA}  is best possible because if $\h$ consists of two disjoint cliques of order $n/2$ each, then $\delta_1(\h)$ is about $\frac14 \binom{n}{2}$ and any 4-vertex set that intersects both cliques can not be absorbed.

For $\a>0$, $i\in \mathbb N$ and two vertices $u,v\in V$, we say that $u$ is $(\a ,i)$-reachable to $v$ if and only if there are at least $\a n^{4i-1}$ $(4i-1)$-sets $W$ such that both $\h[u\cup W]$ and $\h[v\cup W]$ contain $\C$-factors. In this case, we call $W$ a reachable set for $u$ and $v$. Similar definitions for absorbing method can be found in \cite{LM1, LM2}. Suppose that
\[
1/n_0\ll \left\{1/r, \delta \right\} \ll c \ll \min \left\{\delta_3, {1}/{T}, d_2\right\}\le \delta_3, 1/t_0\ll d\ll \theta\le 10^{-4},
\]
and $n_0 \ge 4T!/\theta$.
Let $\h$ be a 3-graph on $n\ge n_0+ T!$ vertices with $\delta_1(\h)\ge (\frac14+\theta)\binom n2$. We will prove that almost all pairs of vertices of $\h$ are $(\beta_0,2)$-reachable to each other, where $\beta_0=c^2 /(5T^{7})$.

\begin{claim}\label{clm:bad}
There are at most $4\theta n^2$ pairs $u, v\in \binom V2$ such that $u$ is not $(\beta_0 ,2)$-reachable to $v$.
\end{claim}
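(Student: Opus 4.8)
The plan is to apply the Regularity Lemma (Theorem~\ref{thm:reg}) to $\h$, pass to the reduced $3$-graph $\R = \R(\h,\cP,d)$, use Lemma~\ref{lem:deg} to transfer the minimum degree condition, and then argue that any two vertices lying in clusters that are "typical" in $\R$ are $(\beta_0,2)$-reachable. The bound $4\theta n^2$ will come entirely from pairs involving an atypical cluster (of which there are at most $\theta t$, contributing at most $\theta t \cdot n \cdot (n/t) = \theta n^2$ pairs), together with pairs inside a single cluster (at most $t\binom{n/t}{2} \le n^2/t$, negligible since $1/t \le 1/t_0$), plus a little slack.

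First I would set the regularity parameters according to the stated hierarchy and apply Theorem~\ref{thm:reg} with $\mu = 1/4$, $\theta$ as given, obtaining clusters $V_1,\dots,V_t$ each of size $m = n/t$ (after deleting at most $T!$ vertices to make $n$ divisible by $T!$, which only perturbs degrees by a negligible amount and can be absorbed into the slack). By Lemma~\ref{lem:deg}, all but at most $\theta t$ clusters $V_i$ satisfy $\deg_\R(V_i) \ge \frac14\binom t2$. Call a cluster \emph{good} if it has this property and \emph{bad} otherwise. For a good cluster, its link graph in $\R$ has at least $\frac14\binom t2$ edges on $t-1$ vertices, so by a standard averaging argument (or a supersaturation-type observation) it contains many edges — in particular, for any two good clusters $V_a, V_b$, the number of common "$\R$-neighbours" structures is large; more concretely, a good cluster $V_a$ lies in enough edges of $\R$ that we can find, inside its neighbourhood, a structure supporting a $\C$-factor on a bounded number of clusters.

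The heart of the argument is the \emph{reachability} step: given a good cluster $V_a$, a vertex $u \in V_a$, and another good cluster $V_b$ with $v \in V_b$, I want to produce $\alpha n^{7}$ seven-element sets $W$ (here $i=2$, so $4i-1 = 7$) with $\h[u\cup W]$ and $\h[v \cup W]$ both containing $\C$-factors. Since $\R$ has density at least $\sim 1/4 > 0$ among good clusters, I can pick a bounded-size configuration of clusters — say clusters forming a short "$\C$-path" from $V_a$ to $V_b$ — such that the associated polyads are regular; by Fact~\ref{fact:R} each edge $V_iV_jV_k \in E(\R)$ carries a $(d',d_2,\delta_3,\delta,r)$-regular $(3,3)$-complex $\h^*$. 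I then invoke the Extension Lemma~\ref{lem:ext}: starting from the single vertex $u$ (a copy of a one-vertex subcomplex $\G'$), almost every such $u$ extends to $\ge c\,m^{b-1}$ copies of the target complex $\G$ on $b$ vertices that realizes two disjoint copies of $\C$ using $u$, $v$, and $7$ additional vertices of $W$ — with $W$ chosen so that $\h[u\cup W]$ and $\h[v\cup W]$ each decompose into $\C$'s. Counting over the bounded number of cluster-configurations and dividing by the number of labelings gives the bound $\beta_0 n^7$ with $\beta_0 = c^2/(5T^7)$, where the $c^2$ reflects two applications of the Extension Lemma (one anchored at $u$, one at $v$) and the $T^7$ absorbs the loss from summing over $\le T^7$ choices of the auxiliary clusters. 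The "almost every $u$" exceptional set from Lemma~\ref{lem:ext} is of size at most $\theta|\G'|_{\h^*} \le \theta m$ per cluster, contributing another $\le \theta n^2$ bad pairs.

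The main obstacle I expect is the bookkeeping in the reachability step: choosing the right target complex $\G$ on exactly $b = 9$ vertices (so that $u$ together with $v$ and a $7$-set $W$ splits into two copies of $\C$, one containing $u$ and one containing $v$, sharing $W$ appropriately) and verifying that the Extension Lemma applies to it inside a regular complex spanning a bounded union of clusters from $\R$ — in particular checking that the relevant polyads are jointly regular and that $e(\h^*) > 0$ on each triple used. One must also be careful that the $7$-set $W$ genuinely gives \emph{two} $\C$-factors (one on $u\cup W$, one on $v\cup W$, each of size $8$, i.e. two copies of $\C$), which forces a specific combinatorial template for $\G$; getting the arithmetic $4i-1 = 7$ with $i=2$ to match a valid template is the delicate point. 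Once the template is fixed, the rest is a routine double application of Lemma~\ref{lem:ext} and a union bound over the $O(\theta)$ bad clusters and exceptional vertices, yielding at most $4\theta n^2$ non-reachable pairs.
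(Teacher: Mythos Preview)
Your high-level strategy---apply the Regularity Lemma, pass to the reduced $3$-graph $\R$, use Lemma~\ref{lem:deg} to isolate at most $\theta t$ bad clusters, and then argue via the Extension Lemma that vertices in good clusters are reachable---matches the paper exactly, and your accounting for the $4\theta n^2$ bound (bad clusters plus deleted vertices plus exceptional vertices from the Extension Lemma) is essentially right. You also correctly anticipate that two invocations of Lemma~\ref{lem:ext} are needed, which explains the $c^2$ in $\beta_0$.

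The gap is in the reachability step itself. Your proposed template $\G$ has $b=9$ vertices and a one-vertex $\G'=\{u\}$, with $u\in V_a$ and $v\in V_b$ both appearing in $\G$. But Lemma~\ref{lem:ext} as stated applies only to $(3,3)$-complexes, i.e.\ complexes on \emph{three} vertex classes. A template that simultaneously contains $u\in V_a$, $v\in V_b$, and auxiliary vertices realizing two $\C$-factors would have to span at least four clusters, and there is no joint regular complex on such a union available from Fact~\ref{fact:R}. So you cannot apply the Extension Lemma directly to your $9$-vertex gadget.

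What the paper does instead is a two-stage argument that stays within $(3,3)$-complexes. First, it designs a $5$-vertex template $\G$ on three classes $X_1=\{x,u\}$, $X_2=\{y,v\}$, $X_3=\{w\}$ with $\G_3=\{xvw,uyw,uvw\}$, encoding that $\{u,v,w\}$ is a \emph{$1$-reachable} $3$-set for the pair $(x,y)$; here $\G'$ has \emph{two} vertices, not one. Applying Lemma~\ref{lem:ext} on the regular complex over a single $\R$-edge $V_iV_kV_{i'}$ shows that almost all pairs $(v_i,v_k)\in V_i\times V_k$ are $(cm^3n^{-3},1)$-reachable. Second---and this is the step your outline leaves implicit---the degree bound $\deg_\R(V_i)\ge\frac14\binom t2$ forces $|N(i)|>(\tfrac12+\tfrac\theta8)t$, so any two good clusters $V_i,V_j$ share a common neighbour $V_k$ in $\R$. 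One then concatenates the $1$-reachability from $v_i$ to some $v_k\in V_k$ with that from $v_j$ to the same $v_k$, obtaining a $7$-set witnessing $(\beta_0,2)$-reachability of $v_i$ and $v_j$. The averaging (``all but $3\theta m$ vertices of $V_i$ are $1$-reachable to at least $\tfrac23 m$ vertices of $V_k$'') is what guarantees a common $v_k$ works for both sides. Your sketch gestures at ``common $\R$-neighbours'' and ``two applications'' but does not pin down this concatenation through $V_k$, which is precisely where the constraint to $(3,3)$-complexes is circumvented.
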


\begin{proof}
Let $n'\in \mathbb{N}$ such that $n-n'<T!$ and $T!$ divides $n'$. Then $n'\ge n_0\ge 4T!/\theta$. As $\theta \le 10^{-4}$, we have $n'\ge \frac{40000}{40001} n$.

Let $\h'$ be an induced subhypergraph of $\h$ on any $n'$ vertices. Since $n\ge 4T!/\theta$, we have
\[
\delta_1(\h')\ge \left(\frac14+\theta \right)\binom n2 - T! (n-1) \ge \left(\frac14+\frac{\theta}2 \right)\binom {n'}2.
\]
We apply Theorem~\ref{thm:reg} to $\h'$, and let $\cP$ be the the family of partitions, with clusters $V_1,\dots,V_t$. Let $m=n'/t$ be the size of each cluster. Define the reduced 3-graph $\R=\R(\h', \cP, d)$ on these clusters as in Section 2.3.

Let $I$ be the set of $i\in [t]$ such that $\deg_{\R}(V_i)<(\frac14+\frac{\theta}4)\binom {t}2$ and let $V_I=\bigcup_{i\in I}V_i$. By Lemma \ref{lem:deg}, we have $|I| \le \theta t/4$ and thus $|V_I|\le (\theta t/4)\cdot m=\theta n'/4$.
Let $N(i)$ be the set of vertices $V_j\in V(\R)\setminus \{V_i\}$ such that $\{V_i, V_j \} \subseteq e$ for some $e\in \R$. For any $i\in [t]\setminus I$,
\[
\left(\frac14+\frac{\theta}4 \right)\binom {t}2\le \deg_{\R}(V_i)\le \binom{|N(i)|}2
\]
implies that $|N(i)|\ge (\frac12+\frac{\theta}8)t$. Thus $|N(i)\cap N(j)|\ge \frac{\theta}4 t$ for any $i, j\in [t]\setminus I$.

Fix two not necessarily distinct $i, j\notin I$ and $V_k\in N(i)\cap N(j)$. We pick $V_{i'}$ and $V_{j'}$ such that $V_i V_k V_{i'}, V_j V_k V_{j'}\in \R$. Note that it is possible to have $i' = j'$ or $i' = j$ or $j' = i$.
Let $\h^*$ be the $(d_3,d_2, \delta_3, \delta, r)$-regular $(3, 3)$-complex with vertex classes $V_i, V_k, V_{i'}$ provided by Fact \ref{fact:R}, where $d_3\ge d$.

Let $\G$ be the (3,3)-complex on $X_1=\{x, u\}$, $X_2=\{y,v\}$, $X_3=\{w\}$ such that $\G_3=\{xvw, uyw, uvw\}$ and $\G_2$ is the family of all 2-subsets of the members of $\G_3$. Note that in $\G_3$ both $\{x,u,v,w\}$ and $\{y,u,v,w\}$ span copies of $\C$.
Let $\G'$ be the induced subcomplex of $\G$ on $\{u, v\}$.
Since $\h^*_3$, the highest level of the complex $\h^*$, is not empty, by Lemma \ref{lem:ext}, all but at most $\theta m^2$ ordered pairs $(v_i, v_k)\in V_i\times V_k$ are extendible to at least $c m^{3}$ labeled copies of $\G$ in $\h^*$, which implies that $v_i$ is $(cm^3 n^{-3},1)$-reachable to $v_k$.
By averaging, all but at most $3\theta m$ vertices $v_i\in V_i$ are $(cm^3 n^{-3},1)$-reachable to at least $\frac 23m$ vertices of $V_k$. We apply the same argument on $V_j, V_k, V_{j'}$ and obtain that for all but at most $3\theta m$ vertices $v_j\in V_j$, $v_j$ is $(cm^3 n^{-3},1)$-reachable to at least $\frac 23m$ vertices of $V_k$. Thus for those $v_i$ and $v_j$, there are $\frac 13m$ vertices $v_k\in V_k$ such that both $v_i$ and $v_j$ are $(cm^3 n^{-3},1)$-reachable to $v_k$.
Fix such $v_i, v_j, v_k$. There are at least $c m^3-m^2$ reachable 3-sets for $v_i$ and $v_k$ from $(V_i, V_{i'}, V_k)$ avoiding $v_j$.\footnote{Recall that it is possible to have $v_j\in V_{i}$ or $v_j\in V_{i'}$ (when $j=i$ or $j=i'$).} Fix one such 3-set, the number of 3-sets from $(V_j, V_{j'}, V_k)$ intersecting its three vertices is at most $3m^2$. So the number of reachable 7-sets for $v_i, v_j$ is at least
\[
\frac{m}3\cdot (c m^3- m^2)\cdot (c m^3-3m^2) >\frac{c^2}4 m^7 \ge \frac{c^2}4 \left(\frac{n'}{T} \right)^7 > \frac{c^2}5 \frac{n^7}{T^{7}} =\beta_0 n^7,
\]
which means that $v_i$ is $(\beta_0,2)$-reachable to $v_j$, where the last inequality holds because $(\frac{n'}{n})^7 \ge (\frac{40000}{40001})^7 > \frac{4}{5}$. Note that this is true for all but at most $2\cdot 3\theta m\cdot m=6\theta m^2$ pairs of vertices in $(V_i, V_j)$. 
Since there are at most $\binom{t}{2} + t$ choices for $V_i$ and $V_j$, $|V_I| \le \theta n' /4$, and $T! \le \theta n'/4$, there are at most
\begin{align*}
& 6\theta m^2  \left(\binom {t}2 + t \right)  +(|V_I|+T!)(n-1) \le 3\theta m^2 (t^2 + t) + \frac{\theta}2 n' (n-1) \le 4 \theta n^2
\end{align*}
pairs $u, v$ in $V(\h)$ such that $u$ is not $(\beta_0 ,2)$-reachable to $v$.
\end{proof}

\medskip

\begin{proof}[Proof of Lemma \ref{lemA}]
Let $\beta=\beta_0^{10}$. Let $V'$ be the set of vertices $v\in V$ such that at least $\frac n{64}$ vertices are not $(\beta_0,2)$-reachable to $v$. By Claim \ref{clm:bad}, $|V'|\le 512{\theta} n$.

There are two steps in our proof. In the first step, we build an absorbing family $\F'$ such that for any small portion of vertices in $V(\h)\setminus V'$, we can absorb them using members of $\F'$. In the second step, we put the vertices in $V'$ not covered by any member of $\F'$ into a set $\A$ of copies of $\C$. Thus, the union of $\F'$ and $\A$ gives the desired absorbing set.

We say that a set $A$ absorbs another set $B$ if $A\cap B=\emptyset$ and both $\h[A]$ and $\h[A\cup B]$ contains $\C$-factors. Fix any 4-set $S=\{v_1,v_2,v_3,v_4\}\in V\setminus V'$, we will show that there are many 24-sets absorbing $S$. First, we find vertices $u_2, u_3, u_4$ such that
\begin{itemize}
\item $v_1u_2u_3u_4$ spans a copy of $\C$,
\item $u_i$ is $(\beta_0, 2)$-reachable to $v_i$, for $i=2,3,4$.
\end{itemize}
For the first condition, consider the link graph $\h_{v_1}$ of $v_1$, which contains at least $(\frac14 +\theta) \binom n2$ edges. By convexity, the number of paths of length two in $\h_{v_1}$ is
\begin{align*}
\sum_{x\in V\setminus \{v_1\}}\binom{\deg_{\h_{v_1}}(x)}2&\ge (n-1)\binom{\frac1{n-1}\sum_{x\in V\setminus \{v_1\}}\deg_{\h_{v_1}}(x)}2\\
&\ge (n-1) \binom{(\frac14 +\theta) n}2 > \frac 1{32}n^3,
\end{align*}
where the last inequality holds because $\theta n\gg 1$.
Since $v_1u_2u_3u_4$ spans a copy of $\C$ if $u_2u_3u_4$ is a path of length two in $\h_{v_1}$, then there are at least $\frac 1{32} n^3$ choices for such $u_2u_3u_4$. Moreover, the number of triples violating the second condition is at most $3\cdot\frac n{64}\cdot \binom n2<\frac 3{128}n^3$. Thus, there are at least $\frac 1{128} n^3$ such $u_2u_3u_4$ satisfying both of the conditions.

Second, we find reachable 7-sets $C_i$ for $u_i$ and $v_i$, for $i=2,3,4$, which is guaranteed by the second condition above. Since in each step we need to avoid at most 21 previously selected vertices, there are at least $\frac{\beta_0}2 n^7$ choices for each $C_i$. In total, we get $\frac 1{128} n^3\cdot (\frac{\beta_0}2 n^7)^3>\beta_0^4 n^{24}$ 24-sets $F=C_1\cup C_2\cup C_3\cup\{u_2, u_3, u_4\}$ (because $\beta_0 < c^2 < 10^{-8}$).
It is easy to see that $F$ absorbs $S$. Indeed, $\h[F]$ has a $\C$-factor since $C_i\cup\{u_i\}$ spans two copies of $\C$ for $i=2,3,4$. In addition, $\h[F\cup S]$ has a $\C$-factor since $v_1u_2u_3u_4$ spans a copy of $\C$ and $C_i\cup\{v_i\}$ spans two copies of $\C$ for $i=2,3,4$.

Now we choose a family $\mathcal F\subset \binom{V}{24}$ of 24-sets by selecting each $24$-set randomly and independently with probability $p=\beta_0^5 n^{-23}$. Then $|\F|$ follows the binomial distribution $B(\binom{n}{24}, p)$ with expectation $\mathbb{E}(|\F|)=p\binom{n}{24}$. Furthermore, for every 4-set $S$, let $f(S)$ denote the number of members of $\F$ that absorb $S$. Then $f(S)$ follows the binomial distribution $B(N, p)$ with $N\ge \beta_0^4 n^{24}$ by previous calculation. Hence $\mathbb{E}(f(S))\ge p\beta_0^4 n^{24}$. Finally, since there are at most $\binom n{24}\cdot 24\cdot \binom n{23}<\frac12n^{47}$ pairs of intersecting 24-sets, the expected number of the intersecting pairs of 24-sets in $\F$ is at most $p^2\cdot \frac12n^{47}=\beta_0^{10}n/2$. 
 
Applying Chernoff's bound on the first two properties and Markov's bound on the last one, we know that, with positive probability, $\mathcal F$ satisfies the following properties:
\begin{itemize}
\item $|\F|\le 2p \binom n{24}<\beta_0^5 n$,
\item for any 4-set $S$, $f(S)\ge \frac p2\cdot \beta_0^4 n^{24}=\beta_0^9 n/2$,
\item the number of intersecting pairs of elements in $\F$ is at most $\beta_0^{10}n$.
\end{itemize}
Thus, by deleting one member from each intersecting pair and the non-absorbing members from $\F$, we obtain a family $\F'$ consisting of at most $\beta_0^{5}n$ 24-sets and for each 4-set $S$, at least $\beta_0^9 n/2-\beta_0^{10} n>\beta_0^{10} n=\beta n$ members in $\F'$ absorb $S$.

At last, we will greedily build $\A$, a collection of copies of $\C$ to cover the vertices in $V'$ not already covered by any member of $\F'$. Indeed, assume that we have built $a<|V'|\le 512\theta n$ copies of $\C$. Together with the vertices in $\F'$, there are at most $4a+24\beta_0^5 n<2049\theta n$ vertices already selected. Then at most $2049{\theta} n^2$ pairs of vertices intersect these vertices. So for any remaining vertex $v\in V'$, there are at least
\[
\deg(v)-1025{\theta} n^2\ge \left(\frac14 +\theta\right) \binom n2 - 2049{\theta} n^2>n/2
\]
edges containing $v$ and not intersecting the existing vertices, where the last inequality follows from $\theta\le 10^{-4}$. So there is a path of length two in the link graph of $v$ not intersecting the existing vertices, which gives a copy of $\C$ containing $v$.

Combining the vertices covered by $\A$ and $\F'$ together, we get the desired absorbing set $W$ satisfying $|W|\le 4\cdot 512{\theta} n+24\beta_0^5 n<2049\theta n$.
\end{proof}

\section{Proof of Theorem \ref{lemE}}

In this section we prove Theorem \ref{lemE}. Our proof is similar to the one of \cite[Theorem 1.4]{CDN}.
First let us start with some notation. Fix a 3-graph $\h= (V, E)$.
Recall that the link graph of a vertex $v\in V$ is a 2-graph on $V\setminus \{v\}$. Then for a set $\mathcal{E}$ of pairs in $\binom{V}{2}$ (which can be viewed as a 2-graph),
let $\deg_{\h}(v, \mathcal{E})=|N_{\h}(v)\cap \mathcal{E}|$. When $\mathcal{E}=\binom{X}{2}$ for some $X\subseteq V$, we write $\deg_{\h}(v, \binom{X}{2})$ as $\deg_{\h}(v, X)$ for short.
Let $\overline\deg_{\h}(v, \mathcal{E})=|\mathcal{E}\cap \binom{V\setminus \{v\}}{2}| - \deg_{\h}(v, \mathcal{E})$.
Given not necessarily disjoint subsets $X, Y, Z$ of $V$, define
\begin{align*} 
e_{\h}(X Y Z) &= \{xyz \in E(\h): x\in X, y\in Y, z\in Z\} \\
\overline{e}_{\h}(X Y Z) &= \left\{xyz \in \binom{V}{3}\setminus E(\h): x\in X, y\in Y, z\in Z \right\}.
\end{align*}
We often omit the subscript $\h$ if it is clear from the context.

The following fact is the only place where we need the \emph{exact} degree condition \eqref{eqdeg}.

\begin{fact}\label{fact:ext}
Let $\h$ be a 3-graph on $n$ vertices with $n\in 4\mathbb N$ satisfying \eqref{eqdeg}. If $S\subseteq V(\h)$ spans no copy of $\C$, then $|S|\le \frac34n$.
\end{fact}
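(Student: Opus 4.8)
The plan is to bound the degree of a vertex lying in a $\C$-free set against the exact hypothesis \eqref{eqdeg}. Write $m=\frac34 n$ and $s=|S|$; since $m\in\mathbb N$, it suffices to derive a contradiction from the assumption $s\ge m+1$. Because $\h[S]$ contains no copy of $\C$, the link graph of any $v\in S$ restricted to $S\setminus\{v\}$ has maximum degree at most $1$ — two common neighbours of a vertex would, together with $v$, span a $\C$ inside $S$ — so that link is a matching and $\deg_\h(v,S)\le\lfloor(s-1)/2\rfloor$. On the other hand, an edge through $v$ that is not contained in $S$ consists of $v$ together with a pair of $V\setminus\{v\}$ that is not contained in $S$, and there are at most $\binom{n-1}2-\binom{s-1}2$ such pairs. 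Hence
\[
\deg_\h(v)\le \left\lfloor\frac{s-1}2\right\rfloor+\binom{n-1}2-\binom{s-1}2
\]
for every $v\in S$.

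Next I would feed this into \eqref{eqdeg} to obtain $\binom{s-1}2-\lfloor(s-1)/2\rfloor\le\binom m2-\frac38 n-c(n)$ for every $v\in S$, and note that the left-hand side is a strictly increasing function of $s$ for $s\ge3$; so it is enough to contradict this at $s=m+1$. When $n\in 8\mathbb N$ the number $m=\frac34 n$ is even, so $\lfloor m/2\rfloor=\frac38 n$, and with $c(n)=1$ the inequality at $s=m+1$ reads $0\le -1$ — an immediate contradiction that in fact already excludes every $s\ge m+1$.

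The genuinely delicate case is $n\in4\mathbb N\setminus8\mathbb N$, where $m$ is odd, $\lfloor m/2\rfloor=\frac38 n-\frac12=\frac38 n+c(n)$, and the displayed bound at $s=m+1$ becomes an \emph{equality} with the right-hand side of \eqref{eqdeg} rather than a contradiction. Here the plan is to exploit the tightness: the chain $\delta_1(\h)\le\deg_\h(v)\le\lfloor(s-1)/2\rfloor+\binom{n-1}2-\binom{s-1}2$ collapses to equalities for every $v\in S$, which forces $\deg_\h(v,S)=\lfloor m/2\rfloor=\frac{m-1}2$ for all $v\in S$. Summing over $v\in S$ gives $3\,e(\h[S])=s\cdot\frac{m-1}2=\frac{m^2-1}2$; but $3\mid m$ implies $m^2-1\equiv-1\pmod3$, so $3\nmid\frac{m^2-1}2$, contradicting that $3\,e(\h[S])$ is a multiple of $3$.

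The main obstacle is precisely this last case: a crude degree count does not by itself close it, and one has to push the extremality of \eqref{eqdeg} through to a structural identity on $e(\h[S])$ and then invoke a divisibility-by-$3$ argument. This arithmetic obstruction is ultimately why the correction term $c(n)$ must depend on $n\bmod 8$, and it parallels the need for two separate Steiner-system constructions in Proposition~\ref{factcounter}; apart from it, the proof is a routine degree calculation.
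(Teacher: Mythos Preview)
Your proof is correct and follows essentially the same line as the paper's: bound $\deg_\h(v)$ for $v\in S$ by $\lfloor(s-1)/2\rfloor+\binom{n-1}2-\binom{s-1}2$, compare with \eqref{eqdeg}, and in the borderline case $n\in4\mathbb N\setminus8\mathbb N$ use that $3\,e(\h[S])$ is a multiple of $3$ while the forced value $\frac{m^2-1}{2}$ is not. The only cosmetic difference is that the paper first passes to a subset $S_0\subseteq S$ of size exactly $\frac34n+1$, whereas you dispose of $s>\frac34n+1$ via monotonicity of $s\mapsto\binom{s-1}2-\lfloor(s-1)/2\rfloor$; and the paper phrases the divisibility obstruction as ``neither $\frac{3n-4}{8}$ nor $\frac{3n+4}{4}$ is divisible by $3$'' and then extracts a single bad vertex $v_0$, while you go straight to the contradiction $3\nmid\frac{m^2-1}{2}$ using $3\mid m$.
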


\begin{proof}
Assume to the contrary, that $S\subseteq V(\h)$ spans no copy of $\C$ and is of size at least $\frac34n+1$. Take $S_0\subseteq S$ with size exactly $\frac34n+1$. Then 
for any $v\in S_0$, $\deg(v, S_0)\le \frac{|S_0|-1}2=\frac38n$. We split into two cases.
\begin{case}
$n\in 8\mathbb N$.
\end{case}
In this case, for any $v\in S_0$, since $\deg(v, S_0)\le \frac38n$, we have that
\[
\deg(v)=\deg(v, S_0)+\deg\left(v, \binom V2\setminus \binom {S_0}2\right)\le \frac38n+\binom{n-1}2-\binom{\frac34n}2<\delta_1(\h),
\]
contradicting \eqref{eqdeg}.

\begin{case}
$n\in 4\mathbb N\setminus 8\mathbb N$.
\end{case}
In this case, for any $v\in S_0$, $\deg(v, S_0)\le \frac38n$ implies that $\deg(v, S_0) \le \frac38n-\frac12$ because $n\in 4\mathbb N\setminus 8\mathbb N$. So we have
\[
3e(S_0)=\sum_{v\in S_0}\deg(v,S_0)\le \left(\frac38n-\frac12\right) \left( \frac34n+1 \right)= \frac{3n-4}8\cdot \frac{3n+4}4.
\]
However, neither $\frac{3n-4}8$ or $\frac{3n+4}4$ is a multiple of 3. Thus $\sum_{v\in S_0}\deg(v,S_0)<\frac{3n-4}8\cdot \frac{3n+4}4$, which implies that there exists $v_0\in S_0$ such that $\deg(v_0, S_0)<\frac38n-\frac12$. Consequently,
\[
\deg(v_0)=\deg(v_0, S_0)+\deg\left(v_0, \binom V2\setminus \binom {S_0}2\right)< \frac38n-\frac12+\binom{n-1}2-\binom{\frac34n}2\le\delta_1(\h),
\]
contradicting \eqref{eqdeg}.
\end{proof}

\begin{proof}[Proof of Theorem \ref{lemE}]
Take ${\e}=10^{-18}$ and let $n$ be sufficiently large. We write $\a ={\e}^{1/3} = 10^{-6}$. Let $\h=(V, E)$ be a 3-graph of order $n$ satisfying \eqref{eqdeg} which is ${\e}$-extremal, namely, there exists a set $S\subseteq V(\h)$ such that $|S| \ge (1-\e)\frac{3n}4$ and $\h[S]$ is $\C$-free.

Let $C\subseteq V$ be a maximum set for which $\h[C]$ is $\C$-free. Define
\begin{equation}\label{eq:A}
A=\left\{x\in V\setminus C: \deg(x, C)\ge (1-\a) \binom{|C|}2\right\},
\end{equation}
and $B=V\setminus (A\cup C)$. We first claim the following bounds of $|A|, |B|, |C|$.
\begin{claim}\label{clm:size}
$|A|>\frac n4(1-4\a^2), |B|<\a^2 n$ and $\frac {3n}4(1-{\e})\le |C|\le \frac {3n}4$.
\end{claim}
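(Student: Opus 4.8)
My plan is to prove the three bounds in the order $|C|\le \frac{3n}4$, then $|B|<\alpha^2 n$, then $|A|>\frac n4(1-4\alpha^2)$, and finally the lower bound $|C|\ge\frac{3n}4(1-\varepsilon)$. The upper bound $|C|\le\frac{3n}4$ is immediate from Fact~\ref{fact:ext}, since $\h[C]$ is $\C$-free by the choice of $C$. For the lower bound on $|C|$: the $\varepsilon$-extremal hypothesis gives a set $S$ with $|S|\ge(1-\varepsilon)\frac{3n}4$ and $\h[S]$ $\C$-free; since $C$ is a \emph{maximum} such set, $|C|\ge|S|\ge(1-\varepsilon)\frac{3n}4$. (I would record this lower bound first, as it will be used implicitly to bound $\binom{|C|}2$ from below in the degree estimates.)

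The heart of the argument is the bound $|B|<\alpha^2 n$. The idea is a double-counting / averaging argument: I would sum $\deg(v)$ over $v\in B$ (or look at the total number of edges incident to $B$), use the minimum-degree condition \eqref{eqdeg}, and compare with the trivial upper bound on the number of available edges. Each $v\in B$ satisfies $\deg(v,C)<(1-\alpha)\binom{|C|}2$ by definition of $B$, so $v$ is \emph{missing} at least $\alpha\binom{|C|}2$ pairs inside $\binom C2$ from its link graph. On the other hand, \eqref{eqdeg} forces $\deg(v)\ge\binom{n-1}2-\binom{3n/4}2+\Theta(n)$, i.e.\ $v$ misses only about $\binom{3n/4}2$ pairs in total. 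Since $|C|\ge(1-\varepsilon)\frac{3n}4$, we have $\binom{|C|}2\ge(1-2\varepsilon)\binom{3n/4}2$, so $v$ misses at least $\alpha(1-2\varepsilon)\binom{3n/4}2$ pairs inside $C$ but can afford to miss only $\binom{3n/4}2+o(n^2)$ pairs total — this alone doesn't bound $|B|$, so the real leverage must come from counting \emph{missing pairs with both endpoints outside a single vertex's perspective}: I expect the clean route is to count triples $xyz$ with $x\in C$, $y\in C$, $z\in B$ that are \emph{non-edges}, from two directions. From the $B$-side each $z\in B$ contributes $\ge\alpha\binom{|C|}2$ such non-edges; from the $C$-side, each pair $\{x,y\}\subseteq C$ lies in at most... — and here one uses that $\h[C]$ being $\C$-free means each pair in $C$ has degree $\le 1$ in $\h[C]$, but more importantly the degree condition applied to $x\in C$ controls how many $z$ (anywhere) can form a non-edge with a fixed pair. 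Balancing $|B|\cdot\alpha\binom{|C|}2$ against the total count of $C$-$C$-anything non-edges, which is $O(|C|\cdot\binom{3n/4}2)$ by summing \eqref{eqdeg} over $x\in C$, yields $|B|=O(|C|/\alpha)\cdot\varepsilon/\!\!\ldots$; tracking constants with $\alpha=\varepsilon^{1/3}$ should give $|B|<\alpha^2 n$.

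Once $|B|<\alpha^2 n$ is in hand, $|A|>\frac n4(1-4\alpha^2)$ follows by arithmetic: $|A|=n-|B|-|C|\ge n-\alpha^2 n-\frac{3n}4=\frac n4-\alpha^2 n$, and one checks $\frac n4-\alpha^2 n>\frac n4(1-4\alpha^2)$ trivially — wait, that would need $-\alpha^2 n > -\alpha^2 n$, so in fact the bound $|A|>\frac n4(1-4\alpha^2)=\frac n4-\alpha^2 n$ is exactly what the subtraction gives, up to using $|C|\le\frac{3n}4$; the factor $4$ is slack absorbing the contribution of $|B|$. The step I expect to be the main obstacle is the double-counting bound on $|B|$: choosing the right family of non-edges to count and getting the constants to close with the specific choice $\alpha=\varepsilon^{1/3}$, $\varepsilon=10^{-18}$. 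Everything else is bookkeeping with binomial coefficients and the elementary estimate $\binom{|C|}2\ge(1-2\varepsilon)\binom{3n/4}2$.
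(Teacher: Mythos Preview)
Your plan for $|C|$ and $|A|$ is correct and matches the paper. The double-counting strategy for $|B|$ is also right in spirit, but the $C$-side bound as you state it has a genuine gap. You write that summing \eqref{eqdeg} over $x\in C$ gives a total of $O\bigl(|C|\cdot\binom{3n/4}{2}\bigr)$ non-edges of type $C$--$C$--anything; but balancing this against $|B|\cdot\alpha\binom{|C|}{2}$ only yields $|B|=O(|C|/\alpha)$, which is vacuous. The factor of $\varepsilon$ you are hoping for does not appear from \eqref{eqdeg} alone, and the remark that ``the degree condition applied to $x\in C$ controls how many $z$ can form a non-edge with a fixed pair'' conflates vertex degree with codegree.

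The missing step is precisely the one you mention and then dismiss as less important. Since $\h[C]$ is $\C$-free, every vertex $v\in C$ satisfies $\deg(v,C)\le\frac{|C|-1}{2}$, so $\overline{\deg}(v,C)\ge\binom{|C|-1}{2}-\frac{|C|-1}{2}$. Subtracting this large \emph{internal} non-degree from the total $\overline{\deg}(v)\le\binom{3n/4}{2}-\frac{3}{8}n+\frac12$ allowed by \eqref{eqdeg}, and using $|C|\ge(1-\varepsilon)\frac{3n}{4}$, gives $\overline{\deg}\bigl(v,\binom{V}{2}\setminus\binom{C}{2}\bigr)<\varepsilon n\,(|C|-1)$. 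Summing over $v\in C$ now yields $\overline{e}\bigl(CC(A\cup B)\bigr)<\varepsilon n\binom{|C|}{2}$, and \emph{this} is what balances against $\overline{e}(BCC)>|B|\cdot\alpha\binom{|C|}{2}$ to give $|B|<\varepsilon n/\alpha=\alpha^2 n$. So you need the minimum-degree condition and the $\C$-free structure of $C$ \emph{together}, combined at the vertex level; the pair-degree observation (each pair in $C$ has degree at most $1$) is true but not the form that makes the subtraction work.
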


\begin{proof}
The estimate on $|C|$ follows from our hypothesis and Fact \ref{fact:ext}. We now estimate $|B|$. For any $v\in C$, we have $\deg(v, C)\le \frac{|C|-1}2$, which gives $\overline{\deg}(v, C)\ge \binom{|C|-1}2 - \frac{|C|-1}2$. By \eqref{eqdeg}, $\overline{\deg}(v) \le \binom{\frac34n}2 - \frac38n+\frac12$. Thus
\begin{align*}
\overline{\deg}\left( v, \binom V2\setminus \binom C2 \right)&\le \binom{\frac34n}2 - \frac38n+\frac12 - \binom{|C|-1}2 + \frac{|C|-1}2  \\
&\le \binom{\frac34n}2 - \binom{|C|-1}2 \quad \text{because }|C|\le \frac34n \\
&= \left(\frac34n-|C|+1\right) \cdot \frac12 \left(\frac34n+|C|-2\right). 
\end{align*}
The estimate on $|C|$ gives $\frac34n\le \frac{|C|}{1-{\e}}<(1+2{\e})(|C|-1)$. 
Hence
\begin{align}
\overline{\deg}\left( v, \binom V2\setminus \binom C2 \right)
&< \left(\frac34n-|C|+1\right) \cdot \frac12 \biggl((1+2{\e})(|C|-1)+|C|-1\biggr) \nonumber \\
&= \left(\frac34n-|C|+1\right) \cdot (1+{\e})(|C|-1) \label{eq:degC}\\
&\le \left(\frac34{\e} n+1\right) \cdot (1+{\e})(|C|-1) <{\e} n\cdot(|C|-1). \label{eq:degCC}
\end{align}
Consequently $\overline{e}(CC(A\cup B))< \frac12|C|\cdot {\e} n\cdot (|C|-1)={\e} n\cdot \binom{|C|}2$. Together with the definition of $A$ and $B$, we have
\[
(|A\cup B|-{\e} n)\binom{|C|}2 < e(CC(A\cup B))\le (1-\a)\binom{|C|}2 |B| + \binom{|C|}2 |A|,
\]
so that $|A\cup B|-{\e} n<|A|+|B|-\a |B|$. Since $A$ and $B$ are disjoint, we get that $|B|<\a^2 n$. Finally, $|A|=n-|B|-|C|> n-\a^2 n - \frac34n=\frac n4(1-4\a^2)$.
\end{proof}

In the rest of the section, we will build four vertex-disjoint $\C$-tilings $\Q, \R, \ss, \T$ whose union is a perfect $\C$-tiling of $\h$. In particular, when $|A|=n/4$, $B=\emptyset$ and $|C|=3n/4$, we have $\Q=\R=\ss=\emptyset$ and the perfect $\C$-tiling $\T$ of $\h$ will be provided by Lemma \ref{lem3}. The purpose of $\C$-tilings $\Q, \R, \ss$ is covering the vertices of $B$ and adjusting the sizes of $A$ and $C$ such that we can apply Lemma \ref{lem3} after $\Q, \R, \ss$ are removed.

\medskip
\noindent {\bf The $\C$-tiling $\Q$.} Let $\Q$ be a largest $\C$-tiling in $\h$ on $B\cup C$ and $q=|\Q|$. 
We claim that $|B|/4\le q\le |B|$. Since $C$ contains no copy of $\C$, every element of $\Q$ contains at least one vertex of $B$ and consequently $q\le |B|$. On the other hand, suppose that $q<|B|/4$, then $(B\cup C)\setminus V(\Q)$ spans no copy of $\C$ and has order
\[
|B|+|C|-4q>|B|+|C|-|B|=|C|.
\]
which contradicts the assumption that $C$ is a maximum $\C$-free subset of $V(\h)$.

\begin{claim}\label{clm:q}
$q + |A| \ge \frac n4$.
\end{claim}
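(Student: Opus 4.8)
First I would reformulate the inequality. Since $|A|+|B|+|C|=n$ and $\Q$ consists of $q$ vertex-disjoint copies of $\C$ lying in $B\cup C$, the set $C':=(B\cup C)\setminus V(\Q)$ has $|C'|=|B|+|C|-4q$ and is $\C$-free (otherwise $\Q$ would not be a largest $\C$-tiling of $\h[B\cup C]$). Hence
\[
q+|A|\ge \tfrac n4 \iff |C'|\le 3|A|.
\]
Two bounds on $|C'|$ are free: the maximality of $C$ gives $|C'|\le |C|$, equivalently $q\ge|B|/4$ (already noted in the excerpt), and Fact~\ref{fact:ext} applied to the $\C$-free set $C'$ gives $|C'|\le\tfrac34 n$, equivalently $4q\ge\tfrac n4-|A|$. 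From Claim~\ref{clm:size} we also have $q\le|B|<\a^2 n$ and $|A|>\tfrac n4(1-4\a^2)$, so $\tfrac n4-|A|<\a^2 n$; thus both of these only yield $q\ge\tfrac14(\tfrac n4-|A|)$, and the crux is to recover the missing factor of $4$.

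To do this, set $\tau:=\tfrac34 n-|C|\ge0$; since $\tfrac n4-|A|=|B|+|C|-\tfrac34 n=|B|-\tau$, the target becomes $q\ge|B|-\tau$, i.e.\ that $\Q$ can be chosen to cover $B$ with essentially one vertex of $B$ per copy. Call $b\in B$ \emph{useful} if its link graph within $C$ is not a matching, equivalently contains a path on three vertices; then $b$ together with three vertices of $C$ spans a copy of $\C$. Because $|B|<\a^2 n$ is tiny relative to $|C|=\Theta(n)$, I would show that the useful vertices of $B$ can be covered by vertex-disjoint copies of $\C$, each meeting $B$ in exactly one vertex, so that $q$ is at least the number of useful vertices (some care is needed in choosing these copies so as not to exhaust the relevant link graphs). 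It then remains to bound the number of non-useful vertices: if $b$ is non-useful then $C\cup\{b\}$ is again $\C$-free, so Fact~\ref{fact:ext} already forces $\tau\ge1$, and a quantitative strengthening — using the maximality of $C$ together with the \emph{exact} degree condition \eqref{eqdeg} to show that a non-useful $b$ is adjacent to all but a few of the pairs meeting $V\setminus C$ — bounds the number of non-useful vertices by a bounded multiple of $\tau$; since non-useful vertices can moreover be absorbed in pairs by copies of $\C$ meeting $B$ in two vertices, this is enough to conclude $q\ge|B|-\tau$.

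The step I expect to be the main obstacle is precisely this last count, in the genuinely near-extremal regime where $|C|$ is within $O(\a^2 n)$ of $\tfrac34 n$ and $B$ contains vertices whose link in $C$ is almost a matching. There the soft (averaging- or regularity-type) estimates carry an unavoidable $\sqrt\a$- or $\e$-sized error, so instead one must use the maximality of $C$ and the exactness of \eqref{eqdeg} sharply — including the parity argument from the proof of Fact~\ref{fact:ext} when $n\notin 8\mathbb N$ — so that the bookkeeping among $|A|,\,|B|,\,|C|,\,q$ and $\tau$ closes with no error term.
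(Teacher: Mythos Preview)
Your proposal is a plan rather than a proof, and the plan contains a concrete error that undermines it. You assert that if $b\in B$ is non-useful (its link in $C$ is a matching) then $C\cup\{b\}$ is $\C$-free. This is false: a copy of $\C$ on $\{b,c_1,c_2,c_3\}$ may arise from edges $bc_1c_2$ and $c_1c_2c_3$, the second lying entirely in $C$, and the matching condition on the link of $b$ says nothing about this configuration. In fact, since $C$ is a \emph{maximum} $\C$-free set, $C\cup\{b\}$ contains a copy of $\C$ for \emph{every} $b\in V\setminus C$; your implication is therefore vacuous, and the subsequent programme of bounding the number of non-useful vertices by a multiple of $\tau$ has no starting point. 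You yourself flag this step as the main obstacle and do not resolve it; the appeals to parity and to the exact form of \eqref{eqdeg} are speculative.

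The paper avoids constructing the tiling and instead double-counts $e(BCC)$. Set $l=\tfrac n4-|A|$; the cases $l\le 0$ and $l=1$ are immediate (the latter via Fact~\ref{fact:ext}). Assuming $l\ge 2$ and $q<l$, one obtains from \eqref{eq:degC}, together with $\tfrac34 n-|C|=|B|-l$ and $|B|\le 4(l-1)$, the lower bound $e(BCC)>(1-3\e)(l-1)\binom{|C|}{2}$. For the upper bound, let $\Q'$ be a maximum $\C$-tiling in which every copy uses exactly one vertex of $B$ and three of $C$, and put $q'=|\Q'|\le q\le l-1$. Any $v\in B\setminus B_{\Q'}$ has its link in $C\setminus C_{\Q'}$ equal to a matching (otherwise $\Q'$ extends), so $\deg(v,C)\le 3q'(|C|-1)+\tfrac12|C|<4q'|C|$; and each $v\in B_{\Q'}$ satisfies $\deg(v,C)\le(1-\a)\binom{|C|}{2}$ by the definition of $B$. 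Summing gives $e(BCC)<q'(1-\a)\binom{|C|}{2}+4\a^2 n\,q'|C|$, which for $q'\le l-1$ contradicts the lower bound. The moral is that your ``non-useful'' idea is precisely what is needed, but applied to $B\setminus B_{\Q'}$ relative to $C\setminus C_{\Q'}$ rather than to $B$ relative to $C$; packaged as a double count, it closes with only the asymptotic form of the degree condition, with no need for parity or sharp bookkeeping.
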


\begin{proof}
Let $l = \frac n4-|A|$. There is nothing to show if $l\le 0$. 
If $l=1$, we have $|B\cup C|=\frac34n+1$, and thus Fact \ref{fact:ext} implies that $\h[B\cup C]$ contains a copy of $\C$. Thus $q\ge 1=l$ and we are done. We thus assume $l \ge 2$ and $l>q\ge |B|/4$, which implies that $|B|\le 4(l-1)$. 
In this case $|B|\ge 2$ because $|C|\le \frac34n$. 

For any $v\in C$, by \eqref{eq:degC}, we have $\overline{\deg}(v, BC)<\left(\frac34n-|C|+1\right) \cdot (1+{\e})(|C|-1)$. By definition, $\frac34n-|C|=|A|+|B|-\frac n4=|B|-l$. So we get
\[
\overline{e}(BCC)<\frac12 |C|\left(\frac34n-|C|+1\right) \cdot (1+{\e})(|C|-1)=(1+{\e})(|B|-l+1)\binom{|C|}2.
\]
Together with $|B|\le 4(l-1)$, this implies
\begin{align}
e(BCC)&>(|B|-(1+{\e})(|B|-l+1))\binom{|C|}2 \nonumber \\
&=((1+{\e})(l-1)-{\e}|B|)\binom{|C|}2 \nonumber \\
&\ge ((1+{\e})(l-1) - 4{\e} (l-1))\binom{|C|}2=(1 - 3{\e})(l-1)\binom{|C|}2. \label{eq:upperbcc}
\end{align}

On the other hand, we want to bound $e(BCC)$ from above and then derive a contradiction. Assume that $\Q'$ is the maximum $\C$-tiling of size $q'$ such that each element of $\Q'$ contains exactly one vertex in $B$ and three vertices in $C$. 
Note that $q'\ge 1$ because $C$ is a maximum $\C$-free set and $B\ne \emptyset$. 
Write $B_{\Q'}$ for the set of vertices of $B$ covered by $\Q'$ and $C_{\Q'}$ for the set of vertices of $C$ covered by $\Q'$. Clearly, $|B_{\Q'}|=q'$, $|C_{\Q'}|=3q'$ and $q'\le q\le l-1$. For any vertex $v\in B\setminus B_{\Q'}$, $\deg(v, C)\le 3q' (|C|-1)+\frac12 |C|<4q' |C|$. Together with the definition of $B$ and Claim \ref{clm:size}, we get
\begin{align}
e(BCC)&=e(B_{\Q'}CC)+e((B\setminus B_{\Q'})CC)\nonumber \\
&\le q' (1-\a)\binom {|C|}2 + |B|\cdot 4q'|C|< q' (1-\a)\binom {|C|}2 +4\a^2 nq' |C|. \label{eq:lowerbcc}
\end{align}
Putting \eqref{eq:upperbcc} and \eqref{eq:lowerbcc} together and using $q'\le l-1$ and $|C|>n/2$, we get
\[
1-3\a^3=1-3{\e}<1-\a + \frac{8\a^2 n}{|C|-1}<1-\a + 16\a^2<1-\frac{\a}2,
\]
which is a contradiction since $\a=10^{-6}$.
\end{proof}

Let $B_1$ and $C_1$ be the vertices in $B$ and $C$ not covered by $\Q$, respectively. By Claim~\ref{clm:size},
\begin{equation}\label{eq:C_1}
|C_1|\ge |C|-3q\ge |C|-3|B|> \frac34n(1-{\e})-3\a^2 n>\frac34n-4\a^2n +1.
\end{equation}

\medskip
\noindent {\bf The $\C$-tiling $\R$.} Next we will build our $\C$-tiling $\R$ which covers $B_1$ such that every element in $\R$ contains one vertex from $A$, one vertex from $B_1$ and two vertices from $C_1$. Since $\Q$ is a maximum $\C$-tiling on $B\cup C$, for every vertex $v\in B_1$, we have that $\deg(v, C_1)\le \frac{|C_1|}2$. Together with \eqref{eq:C_1}, this implies that
\[
\overline{\deg}(v,C_1)\ge \binom{|C_1|}2-\frac{|C_1|}2=\frac{|C_1|(|C_1|-2)}2 >\frac{(\frac34n-4\a^2n)^2-1}2.
\]
Together with \eqref{eqdeg}, we get that for every $v\in B_1$,
\begin{align*}
\overline{\deg}(v, AC_1)&<\binom{\frac34n}2-\frac38n+\frac12 - \frac{(\frac34n-4\a^2n)^2-1}2 \\
&=\frac12\left(\frac32n-4\a^2 n \right) 4\a^2 n -\frac34n+1 < 3\a^2n^2.
\end{align*}
By Claim \ref{clm:size} and \eqref{eq:C_1}, we have that $|A||C_1|> (1-4\a^2)\frac n4 \cdot (\frac34 - 4\a^2)n>\frac3{17}n^2$. Thus, $\overline{\deg}(v, AC_1)<3\a^2n^2<17\a^2|A| |C_1|$, equivalently, $\deg(v, AC_1)> (1-17\a^2)|A||C_1|$. For every $v\in B_1$, we greedily pick a copy of $\C$ containing $v$ 
by picking a path of length two with center in $A$ and two ends in $C_1$ from the link graph of $v$.
Suppose we have found $i<|B_1|$ copies of $\C$, then for any remaining vertex $v\in B_1$, by Claim \ref{clm:size}, the number of pairs not intersecting the existing vertices is at least
\[
\deg(v, AC_1) - 3i\cdot (|A|+|C_1|)> (1-17\a^2)|A||C_1|-3|B_1|\cdot 2|C_1|>|A|,
\]
which guarantees a path of length two centered at $A$, so a copy of $\C$ containing $v$.

\medskip

Now all vertices of $B$ are covered by $\Q$ or $\R$. Let $A_2$ denote the set of vertices of $A$ not covered by $\Q$ or $\R$ and define $C_2$ similarly. 
By the definition of $\Q$ and $\R$, we have $|A_2|=|A| - |B_1|$ and $|C_2| =|B|+|C|-4q-3|B_1|$. Define $s=\frac14(3|A_2|-|C_2|)$. Then 
\begin{align*}
s =\frac14( 3|A| - 3|B_1| - |B| - |C| + 4q + 3|B_1|) = \frac14 ( 4|A| - n + 4q) = q+|A|-\frac n4.
\end{align*}
Thus $s\in \mathbb Z$, and $s\ge 0$ by Claim~\ref{clm:q}. Since $q\le |B|$, by Claim \ref{clm:size},
\begin{equation}\label{eq:ss}
s= q+|A|-\frac n4\le |B|+|A|-\frac n4=\frac34n-|C|\le \frac34{\e} n.
\end{equation}
The definition of $\Q$ and $\R$ also implies that $|C\setminus C_2|\le 3|B|$ and 
\begin{equation}\label{eq:C_2}
|C_2|\ge |C|-3|B|> |C|-3\cdot 2\a^2 |C|=(1-6\a^2)|C|,
\end{equation}
where the second inequality follows from $|B|< \a^2 n<2\a^2 |C|$.

\medskip
\noindent {\bf The $\C$-tiling $\ss$.} Next we will build our $\C$-tiling $\ss$ of size $s$ such that every element of $\ss$ contains two vertices in $A_2$ and two vertices in $C_2$. Note that for any vertex $v\in A_2$, by \eqref{eq:A} and \eqref{eq:C_2},
\[
\overline{\deg}(v, C_2)\le \a \binom {|C|}2\le \a \binom {\frac{1}{1-6\a^2}|C_2|}2< 2\a\binom{|C_2|}2.
\]
Suppose that we have found $i<s$ copies of $\C$ of the desired type. We next select two vertices $a_1, a_2$ in $A_2$ and note that they have at least $(1-4\a)\binom{|C_2|}2$ common neighbors in $C_2$. By \eqref{eq:ss} and\eqref{eq:C_2}, 
\[
(1-4\a)\binom{|C_2|}2 - 2s|C_2|\ge (1-4\a)\binom{|C_2|}2 - \frac32{\e} n|C_2|\ge (1-5\a)\binom{|C_2|}2>0.
\]
So we can pick a common neighbor $c_1c_2$ of $a_1$ and $a_2$ from unused vertices of $C_2$ such that $\{a_1, a_2, c_1, c_2\}$ spans a copy of $\C$.

\medskip

Let $A_3$ be the set of vertices of $A$ not covered by $\Q, \R, \ss$ and define $C_3$ similarly. Then $|A_3|=|A_2|-2s=\frac12(|C_2|-|A_2|)$ and $|C_3|=|C_2|-2s=\frac32(|C_2|-|A_2|)$, so $|C_3|=3|A_3|$. Furthermore, by \eqref{eq:ss} and \eqref{eq:C_2}, we have 
\[
|C_3|= |C_2|-2s\ge (1-6\a^2)|C|-\frac32{\e} n> (1-6\a^2)|C|-3{\e} |C|> (1-7\a^2)|C|.
\]
Hence, for every vertex $v\in A_3$,
\[
\overline{\deg}(v, C_3)\le \a \binom {|C|}2\le \a \binom {\frac{1}{1-7\a^2}|C_3|}2< 2\a \binom{|C_3|}2.
\]
 Since $|C_3|\ge (1-7\a^2)|C|\ge (1-7\a^2)(1-{\e})\frac34n $, by \eqref{eq:degCC}, we know that for any vertex $v\in C_3$,
\begin{align*}
\overline{\deg}(v, A_3C_3)&<{\e} n\cdot(|C|-1) <2{\e} |C_3|^2=6\e |A_3| |C_3|.
\end{align*}

\medskip
\noindent {\bf The $\C$-tiling $\T$.} Finally we use the following lemma to find a $\C$-tiling $\T$ covering $A_3$ and $C_3$ such that every element of $\T$ contains one vertex in $A_3$ and three vertices in $C_3$. Note that in \cite{CDN}, this was done by applying a general theorem of Pikhurko \cite[Theorem 3]{Pik} (but impossible here because we do not have the co-degree condition).
\begin{lemma}\label{lem3}
Suppose that $0<\rho\le 2\cdot 10^{-6}$ and $n_{\ref{lem3}}$ is sufficiently large. Let $\h$ be a 3-graph on $n\ge n_{\ref{lem3}}$ vertices with $V(\h)=X\dot\cup Z$ such that $|Z|= 3|X|$. Further, assume that for every vertex $v\in X$, $\overline \deg(v, Z)\le \rho\binom{|Z|}2$ and for every vertex $v\in Z$, $\overline \deg (v, XZ)\le \rho |X| |Z|$. Then $\h$ contains a perfect $\C$-tiling.
\end{lemma}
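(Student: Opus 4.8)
The plan is to build the perfect $\C$-tiling greedily, one copy of $\C$ at a time, each copy consisting of one vertex of $X$ and three vertices of $Z$. Since $|Z|=3|X|$, such a tiling uses each vertex exactly once, so it suffices to show we can always extend a partial tiling of this type until $X$ is exhausted. Suppose we have already placed $i<|X|$ copies of $\C$ of the desired shape; then $i$ vertices of $X$ and $3i$ vertices of $Z$ are used. Pick any unused vertex $v\in X$. The key point is that a copy of $\C$ through $v$ using three unused vertices of $Z$ corresponds exactly to a path of length two (a ``cherry'') in the link graph $\h_v$ whose three vertices lie in the unused part of $Z$ and whose center $z_0$ is adjacent (in $\h$) to $v$ together with both endpoints --- wait, more precisely we need $z_1 z_2 z_3$ with $vz_1z_2, vz_1z_3\in E(\h)$; that is, $z_1$ is joined to both $z_2,z_3$ in $\h_v$.

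First I would record the relevant density bounds restricted to the unused vertices. Let $Z'\subseteq Z$ be the unused vertices of $Z$, so $|Z'|=|Z|-3i\ge |Z|-3|X| + 3 = 3$ is not quite enough; rather, to keep things comfortable we proceed until $i$ is close to $|X|$ and handle the very last few copies by noting $|Z'|\ge 3(|X|-i)$ stays bounded below by an absolute constant only at the very end. So it is cleaner to aim for a near-perfect tiling covering all but a bounded number of vertices and then finish off by hand. For $v\in X$ and the full set $Z$, the hypothesis $\overline\deg(v,Z)\le \rho\binom{|Z|}{2}$ means the link graph $\h_v[Z]$ has at least $(1-\rho)\binom{|Z|}{2}$ edges. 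For a vertex $z\in Z$, the hypothesis $\overline\deg(z,XZ)\le \rho|X||Z|$ controls how $z$ sits across $X$ and $Z$; in particular, for most $z$, the set of $x\in X$ with $xz$-degree (into $Z$) nearly full is almost all of $X$.

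Second, I would do the counting for a single extension step. Fix an unused $v\in X$ and let $Z'$ be the unused part of $Z$, with $|Z'|=|Z|-3i$. The number of cherries $z_1z_2z_3$ in $\h_v[Z']$ (center $z_1$, endpoints $z_2,z_3$) is
\[
\sum_{z_1\in Z'}\binom{\deg_{\h_v[Z']}(z_1)}{2}\ge |Z'|\binom{\frac{1}{|Z'|}\sum_{z_1\in Z'}\deg_{\h_v[Z']}(z_1)}{2}
\]
by convexity, and the average degree is at least $|Z'|-1-\frac{2\rho\binom{|Z|}{2}}{|Z'|}$, which is still a positive fraction of $|Z'|$ as long as $|Z'|\ge c_0|Z|$ for a suitable absolute constant $c_0$ (using $\rho\le 2\cdot 10^{-6}$). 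Hence there are $\Omega(|Z'|^3)$ such cherries, giving $\Omega(|Z|^3)$ copies of $\C$ through $v$ inside $Z'$ as long as $i\le (1-c_0')|X|$. Therefore the greedy procedure never gets stuck while at least, say, $c_0'|X|$ vertices of $X$ remain; in fact since each step removes only $4$ vertices and $\Omega(|Z|^3)$ is vastly more than the $O(|Z|^2)$ choices that could be blocked by previously used vertices, we can actually run the greedy process all the way down until only a bounded number $O(1)$ of $X$-vertices (and the corresponding $O(1)$ many $Z$-vertices) remain.

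Third, I would finish the last $O(1)$ copies. At that stage we have a constant number $k=|X|-i$ of unused $X$-vertices and $3k$ unused $Z$-vertices; I would argue that the induced sub-3-graph on these $4k$ vertices still has enough edges (the error terms $\rho\binom{|Z|}2$, $\rho|X||Z|$ are a $\rho$-fraction of the total, so after deleting all but a $\rho^{1/2}$-fraction of the ground set the analogous local density among the survivors is still close to $1$ for all but a $\rho^{1/2}$-fraction of vertices --- here one chooses which vertices to leave for the end carefully, e.g.\ only ``good'' vertices whose local non-degrees are at most $\sqrt\rho$ times the maximum), so the same cherry-counting argument produces the final $k$ copies one at a time. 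The main obstacle I anticipate is precisely this endgame bookkeeping: one must make sure that the greedy process does not paint itself into a corner by leaving only ``bad'' vertices at the end. The clean way to handle this is to fix in advance a set $X_0\subseteq X$ and $Z_0\subseteq Z$ of ``good'' vertices of size $|X_0|=(1-\sqrt\rho)|X|$, $|Z_0|=(1-\sqrt\rho)|Z|$ on which all degree conditions hold with $\sqrt\rho$ in place of $\rho$, first greedily tile $X\setminus X_0$ (bounded in size, but we need its $X$-vertices matched to $Z$-vertices, for which we again use that each such $v$ has $\Omega(|Z|^3)$ cherries in all of $Z$), then tile the remaining good part $X_0$ against the remaining good part of $Z$ by the same greedy argument, which now runs without any endgame issue because every surviving vertex stays good throughout. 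Everything else is the routine convexity estimate above.
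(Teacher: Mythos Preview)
Your greedy approach does cover most of $X$, but the endgame is a genuine gap, and the fix you sketch does not repair it. First, the hypotheses already assert that \emph{every} vertex of $X$ and \emph{every} vertex of $Z$ satisfies the stated non-degree bound; there are no exceptional vertices to set aside, so your good/bad sorting is vacuous (one may simply take $X_0=X$, $Z_0=Z$, and the restricted conditions hold with parameter $\approx\rho$, not $\sqrt\rho$). The real obstruction is that the global bounds $\overline\deg(v,Z)\le\rho\binom{|Z|}{2}$ and $\overline\deg(v,XZ)\le\rho|X||Z|$ say nothing about any \emph{particular} constant-size leftover: take all $XZZ$-triples present except $x_1z_1z_2$, $x_1z_1z_3$, $x_1z_2z_3$; every vertex has non-degree at most $3$, yet a greedy run that leaves $x_1$ against $\{z_1,z_2,z_3\}$ is stuck. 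Your counting provides no mechanism to steer the greedy choices away from such a configuration, and the assertion that ``every surviving vertex stays good throughout'' does not address this. Your claim that greedy runs until only $O(1)$ vertices remain is also incorrect: the cherry lower bound is roughly $\tfrac12|Z'|^3-\rho|Z|^2|Z'|$, which turns negative already when $|Z'|\approx\sqrt{\rho}\,|Z|$, so a linear number of $X$-vertices are left unhandled, not a bounded number.

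The paper's proof takes a different route and makes essential use of the second hypothesis (on $\overline\deg(v,XZ)$), which your argument barely touches. It defines an auxiliary graph $G$ on $Z$ by putting $uv\in E(G)$ iff $xuv\in E(\h)$ for at least $(1-\sqrt\rho)|X|$ vertices $x\in X$; the $XZ$-condition then forces $\delta(G)\ge(1-\sqrt\rho)|Z|-1$. A triple in $Z$ spanning two $G$-edges is automatically suitable for all but $2\sqrt\rho|X|$ vertices of $X$. The paper partitions $Z$ into such ``good'' triples \emph{before} touching $X$: a small random absorbing family $\F'$, chosen so that every $x\in X$ has at least $\tfrac1{12}\rho^{1/4}|Z|$ suitable triples inside $\F'$, together with consecutive triples along a Hamilton cycle of $G$ restricted to $Z\setminus V(\F')$ (which exists by Dirac's theorem). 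Finally, $X$ is matched to this fixed family of triples via the Marriage Theorem, with $\F'$ absorbing the few $x$'s that are poorly connected to the Hamilton-cycle triples. Fixing the partition first and reserving the absorbing family is exactly the endgame control your greedy procedure lacks.
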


Applying Lemma \ref{lem3} with $X=A_3$, $Z=C_3$, $\rho=2\a$ finishes the proof of Theorem \ref{lemE}.
\end{proof}

\begin{proof}[Proof of Lemma \ref{lem3}]
Let us outline the proof first. Let $X=\{x_1,\dots, x_{|X|}\}$. Our goal is to partition the vertices of $Z$ into $|X|$ triples $\{Q_1,\dots,Q_{|X|}\}$ such that for every $i=1,\dots, |X|$, $\{x_i\}\cup Q_i$ spans a copy of $\C$ -- in this case we say $Q_i$ and $x_i$ are \emph{suitable} for each other. From our assumptions, every $x\in X$ is suitable for most triples of $\C$, and most triples of $\C$ are suitable for most vertices of $X$. However, once we partition $\C$ into a particular set of triples $\{Q_1,\dots,Q_{|X|}\}$, we can not guarantee that every vertex in $X$ is suitable for most $Q_i$'s. To handle this difficulty, we use the absorbing method -- first find a small number of triples that can absorb any small(er) amount of vertices of $X$ and then extend it to a partition $\{Q_1,\dots,Q_{|X|}\}$ covering $Z$, and finally apply the greedy algorithm and the Marriage Theorem to find a perfect matching between $X$ and $\{Q_1,\dots,Q_{|X|}\}$.
Note that a similar approach was outlined in \cite{Khan2} to prove the extremal case.

We now start our proof. Let $G$ be the graph of all pairs $uv$ in $Z$ such that $\deg(uv, X)\ge (1-\sqrt{\rho})|X|$. We claim that
\begin{equation}\label{eq:dG}
\delta(G)\ge |Z| - \sqrt{\rho} |Z| - 1.
\end{equation}
Otherwise, some vertex $v\in Z$ satisfies $\deg_G (v)<  |Z| - \sqrt{\rho} |Z| - 1$, equivalently, 
$\overline \deg_G (v)> \sqrt{\rho}|Z|$. As each $u\notin N_G(v)$ satisfies $\overline{\deg}(uv, X)> \sqrt{\rho}|X|$, we have
\[
\overline \deg(v, XZ)> \sqrt{\rho}|Z| \cdot \sqrt{\rho} |X| = {\rho}|Z||X|,
\]
contradicting our assumption.
We call a triple $z_1z_2z_3$ in $Z$ \emph{good} if $G[z_1z_2z_3]$ contains at least two edges, otherwise \emph{bad}. Since a bad triple contains at least two non-edges of $G$, by \eqref{eq:dG}, the number of bad triples in $Z$ is at most
\begin{equation*}\label{eq:good}
\sum_{x\in Z}\binom{\overline\deg_{G}(x)}2\le |Z|\binom{\sqrt{\rho} |Z|}2 \le 3 {\rho}\binom{|Z|}3.
\end{equation*}
If $z_1 z_2 z_3$ is good, then by the definition of $G$, it is suitable for at least $(1-2\sqrt{\rho})|X|$ vertices of $X$.
On the other hand, for any $x\in X$, consider the link graph $\h_{x}$ of $x$ on $Z$, which contains at least $(1-\rho) \binom {|Z|}2$ edges. By convexity, the number of triples $z_1z_2z_3$ suitable for $x$ is at least
\begin{align*}
\frac13\sum_{z\in Z}\binom{\deg_{\h_{x}}(z)}2 \ge \frac13|Z| \binom{(1-\rho) (|Z|-1)}2 > (1-2\rho)\binom{|Z|}3,
\end{align*}
where the last inequality holds because $|Z|$ is large enough. 
Thus, the number of good triples $z_1z_2z_3$ suitable for $x$ is at least $(1-2\rho-3{\rho})\binom{|Z|}3= (1- 5{\rho})\binom{|Z|}3$.

Let $\F_0$ be the set of good triples in $Z$. We want to form a family of disjoint good triples in $Z$ such that for any $x\in X$, many triples from this family are suitable for $x$. To achieve this, we choose a subfamily $\mathcal F$ from $\F_0$ by selecting each member randomly and independently with probability $p=2\rho^{1/4} |Z|^{-2}$.
Then $|\F|$ follows the binomial distribution $B(|\F_0|, p)$ with expectation $\mathbb{E}(|\F|) = p |\F_0| \le p\binom{|Z|}{3}$.
Furthermore, for every $x\in X$, let $f(x)$ denote the number of members of $\F$ that are suitable for $x$. Then $f(x)$ follows the binomial distribution $B(N, p)$ with $N\ge (1- 5{\rho})\binom{|Z|}3$ by previous calculation. Hence $\mathbb{E}(f(x))\ge p(1- 5{\rho})\binom{|Z|}3$. 
Finally, since there are at most $\binom{|Z|}3\cdot 3\cdot \binom{|Z|-1}2 < \frac14 |Z|^5$ pairs of intersecting triples, the expected number of the intersecting triples in $\mathcal F$ is at most $p^2 \cdot \frac14 |Z|^5= \rho^{1/2}|Z|$.

By applying Chernoff's bound on the first two properties below and Markov's bound on the last one, we can find a family $\mathcal F\subseteq \F_0$ that satisfies
\begin{itemize}
\item
$|\mathcal F|\le 2p \binom{|Z|}3 <\frac23 \rho^{1/4} |Z|$,
\item
for any vertex $x\in X$, at least $\frac12 p\cdot (1- 5{\rho}) \binom{|Z|}3>\frac1{6}\rho^{1/4}(1- 6{\rho})|Z|$ triples in $\mathcal F$ are suitable for $x$,
\item
the number of intersecting pairs of triples in $\mathcal F$ is at most $2\rho^{1/2}|Z|$.
\end{itemize}
After deleting one triple from each of the intersecting pairs from $\F$, we obtain a subfamily $\mathcal F'$ consisting of at most $\frac23 \rho^{1/4} |Z|$ disjoint good triples in $Z$ and for each $x\in X$, at least
\begin{equation}\label{eq:F'}
\frac{\rho^{1/4}}{6}(1- 6{\rho})|Z| - 2\rho^{1/2}|Z|> \frac{ \rho^{1/4}}{12}|Z|
\end{equation}
members of $\mathcal F'$ are suitable for $x$, where the inequality holds because $\rho\le 2\cdot 10^{-6}$.

Denote $\F'$ by $\{Q_1,\dots, Q_q\}$ for some $q\le \frac23 \rho^{1/4} |Z|$. Let $Z_1$ be the set of vertices of $Z$ not in any element of $\F'$. Define $G'=G[Z_1]$. Note that $|Z_1|=|Z|-3q$. For every $v\in Z_1$, $\overline \deg_{G'}(v)\le \overline \deg_G(v)\le \sqrt{\rho} |Z|$ by \eqref{eq:dG}. Thus
\[
\delta(G') \ge |Z_1| - \sqrt{\rho} |Z| =|Z|-3q -\sqrt{\rho} |Z| > \frac{|Z_1|}2,
\]
because $\rho\le 2\cdot 10^{-6}$. By Dirac's Theorem, $G'$ is Hamiltonian. We thus find a Hamiltonian cycle of $G'$, denoted by $b_1 b_2\cdots b_{3m}b_1$, where $m=|X|-q$. Let $Q_{q+i} = b_{3i-2} b_{3i-1} b_{3i}$ for $1\le i \le m$. Then $Q_{q+1}, \dots, Q_{|X|}$ are good triples.

Now consider the bipartite graph $\Gamma$ between $X$ and $\Q := \{Q_1, Q_2,\dots, Q_{|X|}\}$, such that $x\in X$ and $Q_i\in \Q$ are adjacent if and only if $Q_i$ is suitable for $x$. For every $Q_i$, since it is good, $\deg_{\Gamma}(Q_i)\ge (1-2\sqrt{\rho})|X|$. Let $\Q_2=\{Q_{q+1}, \dots, Q_{|X|}\}$. Let $X_0$ be the set of $x\in X$ such that $\deg_{\Gamma}(x, \Q_2)\le |\Q_2|/2$. Then
\[
|X_0|\frac{|\Q_2|}2\le \overline{e}_{\Gamma}(X, \Q_2)\le 2\sqrt{\rho} |X|\cdot |\Q_2|,
\]
which implies that $|X_0|\le 4\sqrt{\rho}|X|=\frac43 \sqrt{\rho}|Z|$.

We now find a perfect matching between $X$ and $\Q$ as follows.
\begin{itemize}
\item[Step 1.]
Each vertex $x\in X_0$ is matched to a different member of $\F'$ that is suitable for $x$ -- this is possible because of \eqref{eq:F'} and $|X_0| \le \frac43 \sqrt\rho |Z| \le \frac1{12}\rho^{1/4} |Z|$ since $\rho\le 2\cdot 10^{-6}$.
\item[Step 2.]
Each of the unused triples in $Q_1 Q_2 \cdots Q_{q}$ is matched to a suitable vertex in $X\setminus X_0$ -- this is possible because $\deg_{\Gamma} (Q_i) \ge (1 - 2\sqrt \rho)|X| \ge q$.
\item[Step 3.]
Let $X_1$ be the set of the remaining vertices in $X$. Then $|X_1|= |X| - q = |\Q_2|$.
Now consider $\Gamma'=\Gamma[X_1 , \Q_2]$. It is easy to check that $\delta(\Gamma')\ge |X_1|/2$ -- thus $\Gamma'$ contains a perfect matching by the Marriage Theorem.
\end{itemize}
The perfect matching between $X$ and $\Q$ gives rise to the desired perfect $\C$-tiling of $\h$ as outlined in the beginning of the proof.
\end{proof}

\section*{Acknowledgment}
We thank Richard Mycroft for helpful discussion on the Regularity Lemma and Extension Lemma. We also thank two anonymous referees for their valuable comments that improved the presentation.

\bibliographystyle{plain}
\bibliography{Nov2013}

\end{document}